\documentclass[11pt, reqno]{amsart}

\usepackage{amsmath,amssymb,amsthm, epsfig}
\usepackage{hyperref}
\usepackage{amsmath}
\usepackage{amssymb}
\usepackage{color}
\usepackage{ulem}
\usepackage{epsfig}
\usepackage[mathscr]{eucal}

\usepackage[utf8]{inputenc}

\usepackage{stackrel}

\usepackage{pstricks-add}
\usepackage{auto-pst-pdf}
\usepackage{pst-pdf}

\usepackage{epsfig}

\usepackage{pst-grad} 

\usepackage{pst-plot} 

\newtheorem{theorem}{Theorem}
\newtheorem{definition}{Definition}

\newtheorem{lemma}{Lemma}
\newtheorem{proposition}{Proposition}
\newtheorem{corollary}{Corollary}
\newtheorem{remark}{Remark}

\date{}
\numberwithin{equation}{section}
\numberwithin{theorem}{section}
\numberwithin{lemma}{section}
\numberwithin{corollary}{section}
\numberwithin{remark}{section}
\numberwithin{proposition}{section}
\numberwithin{definition}{section}

\def \Div {\mathrm{div}}

\def \R {\mathbb{R}}

\def \loc {\mathrm{loc}}

\begin{document}

\title[Sharp regularity estimates]{Sharp regularity estimates for quasilinear evolution equations}

\author[Amaral]{Marcelo D. Amaral}
\address{Department of Mathematics, Universidade da Integra\c{c}\~{a}o Internacional da Lusofonia Afro-Brasileira - UNILAB,  62785-000 Acarape, CE, Brazil}
\email{marceloamaral@unilab.edu.br}

\author[da Silva]{Jo\~{a}o V\'{i}tor da Silva}
\address{FCEyN, Department of Mathematics, University of Buenos Aires, Ciudad Universitaria-Pabell\'{o}n I-(C1428EGA) - Buenos Aires, Argentina}
\email{jdasilva@dm.uba.ar}

\author[Ricarte]{Gleydson C. Ricarte}
\address{Department of Mathematics, Federal University of Cear\'{a}, 60455-
760 Fortaleza, CE, Brazil}
\email{ricarte@mat.ufc.br}

\author[Teymurazyan]{Rafayel Teymurazyan}
\address{CMUC, Department of Mathematics, University of Coimbra, 3001-501 Coimbra, Portugal}
\email{rafayel@utexas.edu}

\begin{abstract}
We establish sharp geometric $C^{1+\alpha}$ regularity estimates for bounded weak solutions of evolution equations of $p$-Laplacian type. Our approach is based on geometric tangential methods, and makes use of a systematic oscillation mechanism combined with an adjusted intrinsic scaling argument.

\bigskip

\noindent \textbf{Keywords:} Evolution problems, quasilinear equations of $p$-Laplacian type, sharp regularity estimates.

\bigskip

\noindent \textbf{AMS Subject Classifications MSC 2010:} 35B65, 35K55, 35K65, 35J60, 35J70.
\tableofcontents
\end{abstract}

\maketitle

\section{Introduction}\label{s1}

In this work we obtain sharp geometric regularity estimates for bounded weak solutions of quasilinear parabolic equations (possibly singular and degenerate) of $p$-Laplacian type, whose prototype is given by
\begin{equation}\label{1.1}
u_t-\Div(|\nabla u|^{p-2}\nabla u) = f.
\end{equation}
In order to assure the existence of solutions in suitable Sobolev spaces (see \cite{AM07, AMS04, C91, DF85, KL00} for more details), $p$ is chosen such that
$$
\max\left\{1,\frac{2n}{n+2}\right\}<p<\infty.
$$

In our studies the source term $f$ is assumed to be in the \textit{anisotropic Lebesgue space} $L^{q,r}(\Omega_T)$ (where $\Omega\subset\R^n$ is an open, bounded set, $T>0$ and $\Omega_T:=\Omega\times(0,T)$), which is a Banach space endowed with the following norm:
$$
\|f\|_{L^{q, r}(\Omega_T)}:=\left(\int_{0}^{T} \left(\int_{\Omega}|f(x, t)|^qdx\right)^{\frac{r}{q}}dt\right)^{\frac{1}{r}}.
$$

Throughout the paper we will assume the following compatibility conditions:
\begin{equation}\label{cc}\tag{C}
\left\{
\begin{array}{ccc}
\frac{1}{r}+\frac{n}{pq}<1;\\
\max\left\{0;\left(1-\frac{1}{r}\right)(2-p)\right\} \leq \frac{2}{r}+\frac{n}{q}<1,
\end{array}
\right.
\end{equation}
where $q>n$ and $r>2$. The first inequality provides the minimal integrability condition, which guarantees the existence of bounded weak solutions of \eqref{1.1} (see, \cite[Ch.2, \S 1]{D93}). The second compatibility condition defines the fashion in which the gradient of weak solution has a universal H\"{o}lder modulus of continuity.

\begin{figure}[h]
\begin{center}
\psscalebox{0.6 0.6} 
{
\begin{pspicture}(0,-4.6)(13.2,4.6)
\psframe[linecolor=black, linewidth=0.04, dimen=outer](13.2,4.6)(0.0,-4.6)
\psline[linecolor=black, linewidth=0.04](1.6,3.4)(1.6,-3.0)
\psline[linecolor=black, linewidth=0.04](1.2,-2.6)(10.4,-2.6)
\psline[linecolor=black, linewidth=0.04, linestyle=dashed, dash=0.17638889cm 0.10583334cm](2.8,-2.6)(2.8,3.4)
\psline[linecolor=black, linewidth=0.04, linestyle=dashed, dash=0.17638889cm 0.10583334cm](1.6,-1.8)(9.2,-1.8)
\psline[linecolor=black, linewidth=0.04, linestyle=dashed, dash=0.17638889cm 0.10583334cm](1.6,-1.0)(9.2,-1.0)
\psbezier[linecolor=black, linewidth=0.04](3.2,3.4)(3.2,2.6)(3.05,-0.6)(4.325,-1.0)(5.6,-1.4)(8.45,-1.4)(9.2,-1.4)
\psbezier[linecolor=black, linewidth=0.04](4.8,3.4)(4.8,2.6)(5.309091,0.2)(6.523077,-0.2)(7.737063,-0.6)(9.538462,-0.6)(10.4,-0.6)
\psline[linecolor=black, linewidth=0.04, linestyle=dashed, dash=0.17638889cm 0.10583334cm](4.4,3.4)(4.4,-2.6)
\rput[bl](10.4,-3.0){$q$}
\rput[bl](1.2,3.0){$r$}
\rput[bl](4.2,-3.2){$\frac{n}{2-p}$}
\rput[bl](2.7,-3.1){$n$}
\rput[bl](10.6,-0.9){$\frac{\frac{n}{q}+\frac{2}{r}}{\frac{r-1}{r}}=2-p$}
\rput[bl](1.2,-1.9){$2$}
\rput[bl](0.9,-1.2){$\frac{4-p}{2-p}$}
\rput[bl](4.0,-0.87){$(2-p)\left(1-\frac{1}{r}\right)<\frac{n}{q}+\frac{2}{r}< 1$}
\psdots[linecolor=black, dotstyle=triangle*, dotsize=0.2](1.6,3.4)
\rput{31.265638}(0.19308585,-5.7662797){\psdots[linecolor=black, dotstyle=triangle*, dotsize=0.22](10.4,-2.6)}
\psline[linecolor=black, linewidth=0.04](9.2,-1.4)(10.4,-1.4)
\psline[linecolor=black, linewidth=0.04, linestyle=dashed, dash=0.17638889cm 0.10583334cm](9.2,-1.0)(10.4,-1.0)
\psline[linecolor=black, linewidth=0.04, linestyle=dashed, dash=0.17638889cm 0.10583334cm](9.2,-1.8)(10.4,-1.8)
\rput[bl](10.7,-1.6){$\frac{n}{q}+\frac{2}{r}=1$}
\end{pspicture}
}
\end{center}
\caption{ Regions of definition of $\frac{n}{q}+\frac{2}{r} \in \left[(2-p)\left(1-\frac{1}{r}\right), 1\right)$ for $\max\left\{1, \frac{2n}{n+2}\right\}<p<2$.}
\end{figure}
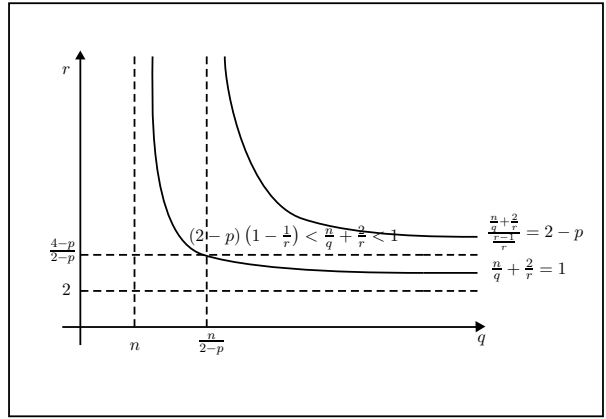

The regularity of weak solutions of quasilinear evolution problems received wide attention in the last decades (see, for example, \cite{AM07, AMS04, BC04, C91, DOS18, daSS18, D93, DF84, DF85, DF285, DiBUV02, K08, KM11, KM13, LSU68, L96, TU14, U08, W86}) due to its connection to a number of problems arising in biology, chemistry, mathematical physics, geometric and free boundary problems, etc (cf. \cite{BDM13} and \cite{DiBUV02} for complete essays on regularity of evolution equations with degenerate diffusion). Although weak solutions of \eqref{1.1} under the compatibility assumptions \eqref{cc} are known to be locally of the class $C^{1+\alpha}$ (in the parabolic sense) for some $\alpha \in (0, 1)$, the sharp exponent is known only for some specific cases (see \cite{ATU17, ATU18, IM89, K08, LL17}). This type of quantitative information plays an essential role in the study of blow-up analysis, related geometric and free boundary problems and for proving Liouville type results (see \cite{AT15, ART15, CLRT14, DOS18, daSS18, TU142} for some enlightening examples).

We recall (see \cite{D93,DiBUV02,LSU68,L96}) that when $p>2$ and
$$
  \frac{1}{r} + \frac{n}{pq}<1< \frac{2}{r}+ \frac{n}{q},
$$
weak solutions of \eqref{1.1} are of class $C^\alpha$ for some $\alpha\in(0,1)$. Using compactness and geometric tangential methods (see \cite{AT15, ART15, C89, CLRT14, DD18, DT17, T13, T14}) and intrinsic scaling techniques (see \cite{DOS18, D93, dBGV12, U08}), the sharp value of $\alpha$ was revealed in \cite[Theorem 3.4]{TU14}. The latter, however, leaves open issues in the following scenarios:
$$
  \frac{1}{r}+\frac{n}{pq}<1\,\,\textrm{ and }\,\,\frac{2}{r}+\frac{n}{q}=1
$$
and
$$
   \frac{1}{r}+\frac{n}{pq}<1 \,\,\textrm{ and }\,\, \frac{2}{r}+\frac{n}{q}<1.
$$
In this work we will solve it in the second scenario. More precisely, our main result reveals that bounded week solutions of \eqref{1.1} are locally of the class $C^{1+\alpha}$ (in the parabolic sense) in the critical zone (i.e. where gradient is small enough, see Section \ref{s3}), with
\begin{equation}\label{1.2}
\alpha:=\displaystyle\min\left\{\frac{1-\left(\frac{n}{q}+\frac{2}{r}\right)}{p\left[1-\left(\frac{n}{pq}+\frac{1}{r}\right)\right]-\left[1-\left(\frac{n}{q}+\frac{2}{r}\right)\right]}, \, \alpha_{\mathrm{H}}^{-} \right\},
\end{equation}
where $\alpha_{\mathrm{H}}\in(0,1]$ is the optimal regularity exponent for solutions of homogeneous case ($f = 0$). Note that the denominator in \eqref{1.2} is strictly positive. Indeed,
$$
p\left[1-\left(\frac{n}{pq}+\frac{1}{r}\right)\right]-\left[1-\left(\frac{n}{q}+\frac{2}{r}\right)\right] =  (p-1)\left(1-\frac{1}{r}\right) +\frac{1}{r}>0.
$$

Such a quantitative estimate in \eqref{1.2} constituted a long-standing open issue and it was solved up to now only in the linear setting ($p=2$). Notwithstanding, the analyse for the case $p\neq2$ is considerably more challenging.

In our approach, we makes use of an adjusted $\theta-$intrinsic scaling technique inspired by \cite{TU14} and \cite{U08} (see also \cite{DOS18}), where $\theta>0$ is the \textit{intrinsic scaling factor} for the temporal variable, which depends on the magnitude of the gradient inside the critical zone in the following way:
\begin{equation}\label{1.3}
\theta:=2+(2-p)\log_{\rho}(\rho^\alpha+|\nabla u(0,0)|),
\end{equation}
where $\rho\in\left(0,\frac{1}{2}\right)$ is a universal constant (see Lemma \ref{l3.2} for details). Furthermore, when $p\geq2$ our result remains true outside of the critical zone as well (see Section \ref{Sec3.2} for details).

Our estimates are natural extensions (regarding to $C^{1+\alpha}$ scenario) of those obtained in \cite[Theorem 3.4]{TU14} and \cite[Theorem 4.1]{T13} concerning of $C^{\alpha}$ range, and to some extent, of those from \cite{AZ15}, \cite[Theorem 5.5 and 5.9]{APR}, \cite[Section 5]{DT17}, \cite[Theorem 3]{T14}. The novelty of our approach consists of removing the restriction of analyzing the desired $C^{1+\alpha}$ regularity estimates just along the \textit{a priori} unknown set of critical points of solutions (i.e. along the set where gradient vanishes), where the diffusivity of the equation collapses (see, for example, \cite{daSLR16, DOS18, daSRS18, daSS18, T14} and \cite{T15}, where sharp and improved regularity estimates are obtained along the set of certain degenerate points of solutions). Furthermore, unlike \cite{DOS18}, \cite{DT17}, \cite{TU14}, we also treat the singular case, i.e., when $\max\left\{1,\frac{2n}{n+2}\right\}<p<2$, which is a non-trivial task, because in this setting the degeneracy degree of $p$-Laplacian blows-up along critical points.

Heuristically, in order to obtain the desired $C^{1+\alpha}$ regularity estimate, one should approach solutions by suitable affine functions. However, in a specific iterative scheme, these functions ``are not in the kernel of operator'', which provides an accumulative error in each step of approximation (see section \ref{s3} for details, also compare with \cite{ART15}, \cite[Section 4 and 5]{DT17} and \cite{T14}). To overcome this obstacle, we use a technique based on the notion of geometric tangential analysis. More precisely, by using a suitable approximation argument, we show that for each inhomogeneous equation with flattened source term there exists a fine tangential profile, which connects to weak solutions with a small prescribed approximation error - transporting the regularity back to the original equation (Lemma \ref{l2.1} and \ref{l3.1}). We then iterate this estimate in a systematic manner, properly adjusted to the intrinsic scaling of equation. Inspired by the recent results from \cite{ATU17, ATU18, APR} and \cite{TU14}, we obtain an estimate (Theorem \ref{t3.2}), which provides a precise control of the oscillation of weak solution of \eqref{1.1} in terms of the magnitude of its gradient.

Another fundamental aspect in our approach concerns to the geometry of the parabolic cylinders for which our geometric estimates hold. Unlike \cite{TU14} (see also \cite{DD18}  and \cite{DOS18}), we must adjust such cylinders according to range of $p$. Summarily, for the singular case, we consider the standard parabolic cylinder with the $\theta$-intrinsic geometry. On the other hand, for the degenerate counterpart, we must correct the geometry of corresponding cylinders by adjusting (in a suitable manner) its height in order to access the desired regularity estimate via an iterative proceeding (see Section \ref{s3} for a complete explanation about such a construction).

It is worth mentioning, that under appropriate structural conditions, the techniques used in this paper allow one to treat the case of more general evolution equations as follows
$$
  u_t-\Div(\mathcal{A}(x,t,\nabla u))=f,
$$
as long as the operator has suitable $p$-Laplacian structure, which gives access to existence and regularity theory to weak solutions (see, \cite[Ch.2, \S 1]{D93}).

Observe that our result is in accordance with well-known estimates obtained in some specific cases. For example, when $p=2$ from \eqref{1.2} we obtain
$$
\alpha=1-\Big(\frac{n}{q}+\frac{2}{r}\Big),
$$
which is the optimal $C^{1+\alpha}$ exponent for the inhomogeneous heat equation (as was obtained by energy methods in \cite{K08}, see also \cite{DT17} for the fully nonlinear setting). Another example would be the case of bounded (or else, independent) in time force term, that is, roughly speaking, $r=\infty$, then \eqref{1.2} implies
$$
\alpha=\min\left\{\frac{q-n}{q(p-1)},\,\alpha^-_{\mathrm{H}}\right\},
$$
which is the optimal $C^{1,\alpha}$ regularity exponent for the elliptic equation for $p\geq2$ (see \cite{AZ15}). Moreover, if also $q=\infty$ (i.e. the source term is bounded), then \eqref{1.2} gives
$$
  \alpha=\min\left\{\frac{1}{p-1}, \, \alpha^-_{\mathrm{H}}\right\},
$$
which is the expected optimal regularity in certain scenarios of elliptic equations (see \cite{ATU17}, \cite{ATU18} and \cite{APR}).

The table below provides a global picture (recent advances) between the elliptic (cf. \cite{ATU17}, \cite{ATU18}, \cite{AZ15} and \cite[Theorem 4.1]{T13}) and parabolic regularity theory (cf. \cite[Theorem 3.4]{TU14}, see also \cite{DT17}) for equations of $p-$Laplacian type:
{\scriptsize{
\begin{table}[h]
\centering
\begin{tabular}{|c|c|c|c|c}
\cline{1-4}
 $f \in L^q(B_1)$  & \text{Sharp Regularity} & $f \in L^{q, r}(Q_1)$ & \text{Sharp Regularity}  \\ \cline{1-4}
   $\frac{n}{p} < q<n$ &  $C_{\loc}^{0, \min\{\alpha_0^{-}, \beta(n, p, q)\}}$ & $\frac{n}{q}+\frac{2}{r}>1>\frac{1}{r}+\frac{n}{pq}$ & $\text{par}-C_{\loc}^{\alpha}, \,\,p>2$ \\ \cline{1-4}
   $q=n$ &  \text{Open problem} & $\frac{n}{q}+\frac{2}{r}=1>\frac{1}{r}+\frac{n}{pq}$ & \text{Open problem} \\ \cline{1-4}
   $n<q< \infty$ &  $C_{\loc}^{1,\min\left\{\alpha_{\mathrm{H}}^{-}, \frac{q-n}{q(p-1)}\right\}}, \,\,p>2$ & Condition (C)& $\text{par}-C_{\loc}^{1+\alpha}$  \\ \cline{1-4}
   $q=\infty$ &  $C_{\loc}^{1,\min\left\{\alpha_{\mathrm{H}}^{-}, \frac{1}{p-1}\right\}}. \,\,p>2$ & $q=r=\infty$ & $C_{\loc}^{1,\min\left\{\alpha_{\mathrm{H}}^{-}, \frac{1}{p-1}\right\}}$  \\ \cline{1-4}
\multicolumn{4}{|l|}{ \hspace{1.3cm} {\bf Elliptic Theory  \qquad \qquad \qquad  \qquad \quad \quad  Parabolic Theory}} &  \\ \cline{1-4}
\end{tabular}
\end{table}}}

Up to date, the sharp regularity on the borderline conditions $q=n$ and $q=\infty$ (resp. $\frac{n}{q}+\frac{2}{r}=1$ and $q=r=\infty$) were established (in general) just for some special sceneries, in particular for the linear case and for $p$-Poisson equation in $2-$D, see \cite[Theorem 1]{ATU17}, \cite[Theorem 2]{ATU18}, \cite{DT17} and \cite{K08} for details.

The paper is organized as follows: in Section \ref{s2} we show that solutions of \eqref{1.1} can be approximated by suitable $p$-caloric functions. In Section \ref{s3} we prove the main result of this paper (Theorem \ref{t3.1}) by obtaining sharp estimates inside  and outside of the critical zone. As a consequence, we find precisely how the modulus of continuity degenerates (improves) along certain $\varepsilon$-layers related to borderline conditions in \eqref{cc} (Corollary \ref{c3.2} and Corollary \ref{c3.3}).

\section{Preliminaries}\label{s2}

In this section we prove that weak solutions of \eqref{1.1} can be approximated by $p$-caloric functions. We start by the notion of weak solutions.

\begin{definition}\label{d2.1} A function $u \in  C_{\loc}(0, T; L^2(\Omega)) \cap L^p_{\loc}(
0, T; W_{\loc}^{1, p}(\Omega))$ is called a weak solution of \eqref{1.1} in $\Omega\times(0,T]$, if for every compact set $K\subset\Omega$, every $[t_1, t_2] \subset (0, T]$ and $\psi\in H^1_{\loc}(0, T; L^2(K))\cap L^p_{\loc}(0,T; W_{\loc}^{1, p}(K))$ there holds
$$
\displaystyle \left.\int_{K} u \psi\,dx \right|^{t_2}_{t_1} + \int_{t_1}^{t_2} \int_{K} \left[-u \psi_t+ |\nabla u|^{p-2}\nabla u \cdot \nabla \psi \right]\,dx\,dt = \int_{t_1}^{t_2} \int_{K} f \psi\,dx\,dt.
$$
\end{definition}

An equivalent definition of weak solutions via Steklov average allows to prove the next Caccioppoli type estimate, which plays an essential role in proving the existence of $p$-caloric approximation of weak solutions.

\begin{proposition}\label{p2.1} Let $K \times [t_1, t_2] \subset \Omega \times (0, T]$. If $u$ is a weak solution of \eqref{1.1}, then there exists a constant $C>0$, depending only on $n$, $p$ and $K\times [t_1, t_2]$ such that
\begin{eqnarray*}
&&\sup\limits_{t_1<t<t_2}\int_{K} u^2 \xi^p\,dx + \int_{t_1}^{t_2} \int_{K} |\nabla u|^p \xi^p\,dx\,dt\nonumber\\
&\leq&\int_{t_1}^{t_2} \int_{K} |u|^p(\xi^p+|\nabla \xi|^p)\,dx\,dt+ C\int_{t_1}^{t_2}\int_{K} u^2\xi^{p-1}|\xi_t|\,dx\,dt + C\|f\|_{L^{q, r}},
\end{eqnarray*}
for every $\xi \in C_0^{\infty}(K \times (t_1, t_2); [0, 1])$.
\end{proposition}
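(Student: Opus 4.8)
The plan is to derive the energy estimate from the weak formulation by choosing a test function of the form $\psi = u\xi^p$, suitably justified through Steklov averages so that the time-derivative term makes sense. First I would recall the Steklov average $u_h(x,t) := \frac{1}{h}\int_t^{t+h} u(x,s)\,ds$ and the corresponding averaged version of the weak formulation of \eqref{1.1}, namely $\int_K \left[ (u_h)_t \varphi + (|\nabla u|^{p-2}\nabla u)_h \cdot \nabla \varphi \right]\,dx = \int_K f_h \varphi \, dx$ for admissible $\varphi$ supported in $K$. Then I would test with $\varphi = u_h \xi^p$, where $\xi \in C_0^\infty(K\times(t_1,t_2);[0,1])$, integrate in $t$ over $(t_1,t_2)$, and let $h \to 0$, using standard convergence properties of Steklov averages in the relevant $L^p$ and $L^2$ spaces. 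The boundary terms at $t_1$ and $t_2$ vanish because $\xi$ has compact support in time.

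The resulting identity has three groups of terms to control. The parabolic term $\int\int (u_h)_t u_h \xi^p$ becomes, after integration by parts in time and passing to the limit, $\frac{1}{2}\sup_t \int_K u^2\xi^p\,dx$ up to the term $-\frac{1}{2}\int\int u^2 (\xi^p)_t = -\frac{p}{2}\int\int u^2 \xi^{p-1}\xi_t$, which is absorbed into the $C\int\int u^2 \xi^{p-1}|\xi_t|$ term on the right. For the diffusion term, expanding $\nabla(u\xi^p) = \xi^p\nabla u + p u \xi^{p-1}\nabla\xi$ gives the good term $\int\int |\nabla u|^p\xi^p$ plus a cross term $p\int\int |\nabla u|^{p-2}(\nabla u\cdot\nabla\xi) u\xi^{p-1}$, which is estimated by Young's inequality: $|\nabla u|^{p-1}|u||\nabla\xi|\xi^{p-1} \leq \e |\nabla u|^p\xi^p + C_\e |u|^p|\nabla\xi|^p$, with $\e$ small enough to absorb into the left-hand side. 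Finally, the source term $\int\int f u\xi^p$ is bounded using H\"older's inequality in the anisotropic space $L^{q,r}$ against $u\xi^p \in L^{q',r'}$, together with the local boundedness of $u$ (or a further application of Young's inequality and Sobolev embedding), yielding the $C\|f\|_{L^{q,r}}$ contribution; here one uses that under the first compatibility condition in \eqref{cc} the conjugate exponents are subcritical for the parabolic Sobolev embedding, so this term is indeed controlled by the left-hand side plus a constant depending on the data.

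The main obstacle I expect is the rigorous justification of the Steklov-average limit procedure for the parabolic term, since $u_t$ need not exist pointwise and one must work entirely with the averaged equation before passing to the limit; care is needed to show that $\int\int (u_h)_t u_h \xi^p\,dx\,dt \to \frac{1}{2}\int\int \partial_t(u^2)\xi^p$ in the appropriate weak sense and that the supremum in $t$ emerges correctly (typically by first fixing $t_2$ as a Lebesgue point and integrating over $(t_1,t_2)$, then taking a supremum). This is, however, a by-now-classical argument (see \cite[Ch.2]{D93}, \cite{DiBUV02}, \cite{LSU68}); once the limit passage is in place, the remaining estimates are routine applications of Young's and H\"older's inequalities, and the constant $C$ depends only on $n$, $p$, and the geometry of $K\times[t_1,t_2]$ as claimed.
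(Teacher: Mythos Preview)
Your proposal is correct and follows essentially the same approach as the paper: the paper's proof is in fact only a sketch that instructs one to test with $\psi = u_h\xi^p$ (Steklov average), integrate in time, pass to the limit $h\to 0$, and apply Young's inequality, referring to \cite[Ch.3, \S 6]{dBGV12} for details. Your write-up is actually more detailed than what appears in the paper.
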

\begin{proof}
We sketch the proof here. It follows by taking $\psi=u_h\xi^p$ as a test function, where $u_h$ is Steklov average of $u$, i.e.
$$
u_h:=\left\{
\begin{array}{rcl}
\displaystyle\frac{1}{h}\int_{t}^{t+h} u(\cdot, \tau)\,d\tau  & \mbox{if} & t \in (0, T-h],\\
0 & \mbox{if} & t \in (T-h, T].
\end{array}
\right.
$$
With the usual combination of integrating in time, passing to the limit as $h\rightarrow0$ and applying Young's inequality, we get the desired estimate. We refer the reader to \cite[Ch.3, \S 6]{dBGV12} for details.
\end{proof}
In order to state the next result, we set
$$
Q_{\rho}(x_0, t_0):=B_{\rho} \times(t_0-\rho^{\theta}, t_0],
$$
where $B_\rho$ is the ball of radius $\rho>0$ centered at the origin. Note that
$$
Q_1:=Q_1(0,0)=B_1\times(-1,0].
$$

\begin{lemma}\label{l2.1}
If $u$ is a weak solution of \eqref{1.1} in $Q_1$ with
$\|u\|_{L^{\infty}(Q_1)} \leq 1$, then $\forall\varepsilon>0$ there exists $\delta= \delta(p,n,\varepsilon)>0$ such that whenever $\|f\|_{L^{q, r}(Q_1)}\leq\delta$ there exists a $p$-caloric function $\phi: Q_{1/2} \to \R$ such that
\begin{equation}\label{2.1}
\max\left\{\|u-\phi\|_{L^{\infty}(Q_{1/2})},\,\|\nabla(u-\phi)\|_{L^{\infty}(Q_{1/2})}\right\}<\varepsilon.
\end{equation}
\end{lemma}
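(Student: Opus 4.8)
The plan is to argue by contradiction through a compactness/normal families argument, which is the standard route for this type of "approximation by the homogeneous profile" lemma. Suppose the statement fails for some $\varepsilon_0>0$. Then there exists a sequence of weak solutions $u_j$ of
$$
\partial_t u_j - \Div(|\nabla u_j|^{p-2}\nabla u_j) = f_j \quad\text{in } Q_1,
$$
with $\|u_j\|_{L^\infty(Q_1)}\le 1$ and $\|f_j\|_{L^{q,r}(Q_1)}\le 1/j$, yet for which no $p$-caloric $\phi$ on $Q_{1/2}$ satisfies \eqref{2.1}. The goal is to extract a limit $u_\infty$ that is $p$-caloric on $Q_{1/2}$ and to which (a tail of) the sequence converges in $C^1_{\text{par}}$, producing a contradiction by taking $\phi = u_\infty$.

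First I would establish uniform compactness estimates for $\{u_j\}$. Applying the Caccioppoli inequality of Proposition \ref{p2.1} on a nested cylinder between $Q_{1/2}$ and $Q_1$ (with a fixed cutoff $\xi$), and using $\|u_j\|_{L^\infty(Q_1)}\le 1$ together with $\|f_j\|_{L^{q,r}}\le 1$, one gets uniform bounds on $\sup_t \int u_j^2\xi^p\,dx$ and on $\int\!\!\int |\nabla u_j|^p\xi^p\,dx\,dt$; hence $\{u_j\}$ is bounded in $L^p_{\loc}(0,T;W^{1,p}_{\loc})$. Next, from the equation the time derivatives $\partial_t u_j$ are bounded in a suitable negative Sobolev space (dual of $W^{1,p}_0$ plus the $L^{q,r}$ contribution of $f_j$). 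An Aubin--Lions--Simon argument then yields a subsequence with $u_j \to u_\infty$ strongly in $L^p_{\loc}$ and a.e.; the weak $L^p$ limit of $|\nabla u_j|^{p-2}\nabla u_j$ can be identified with $|\nabla u_\infty|^{p-2}\nabla u_\infty$ via monotonicity of the $p$-Laplacian (Minty's trick), so passing to the limit in the weak formulation and using $f_j\to 0$ in $L^{q,r}$ shows $u_\infty$ is a weak solution of the homogeneous equation $\partial_t u_\infty - \Div(|\nabla u_\infty|^{p-2}\nabla u_\infty)=0$ in $Q_{3/4}$, i.e. $u_\infty$ is $p$-caloric, and moreover $\|u_\infty\|_{L^\infty}\le 1$.

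The main obstacle is upgrading the weak convergence to convergence of the \emph{gradients} in $L^\infty(Q_{1/2})$, which is exactly what \eqref{2.1} demands. For this I would invoke the interior $C^{1+\alpha}_{\text{par}}$ regularity theory for weak solutions of $p$-Laplacian type equations with $L^{q,r}$ source under condition \eqref{cc} (as recalled in the introduction, these solutions are locally $C^{1+\alpha}_{\text{par}}$; see \cite{D93,DiBUV02}), applied uniformly along the sequence. Because $\|u_j\|_{L^\infty(Q_1)}\le1$ and $\|f_j\|_{L^{q,r}}\le1$ are uniform, this yields a uniform modulus of continuity for $u_j$ and $\nabla u_j$ on $Q_{5/8}$, so $\{u_j\}$ and $\{\nabla u_j\}$ are precompact in $C(\overline{Q_{1/2}})$ by Arzelà--Ascoli. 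Combined with the a.e. convergence already obtained, this forces $u_j\to u_\infty$ and $\nabla u_j\to\nabla u_\infty$ uniformly on $Q_{1/2}$. Hence for $j$ large, $\phi:=u_\infty$ satisfies $\max\{\|u_j-\phi\|_{L^\infty(Q_{1/2})},\|\nabla(u_j-\phi)\|_{L^\infty(Q_{1/2})}\}<\varepsilon_0$, contradicting the choice of $u_j$. This proves the lemma with $\delta=\delta(p,n,\varepsilon)$ obtained from the contradiction argument. One subtlety worth flagging: to make the intrinsic-scaling geometry of $Q_\rho$ (depending on $\theta$) harmless here, the lemma is stated and used with the fixed geometry $Q_1=B_1\times(-1,0]$, so no scaling-parameter dependence enters; the $\theta$-dependence is reintroduced only later in Section \ref{s3}.
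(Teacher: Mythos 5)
Your proposal follows essentially the same route as the paper: a compactness/contradiction argument driven by the Caccioppoli estimate of Proposition \ref{p2.1}, an Aubin--Lions/Simon compactness step yielding strong $L^p$ convergence to a $p$-caloric limit, and then an upgrade to uniform $C^1$ convergence on $Q_{1/2}$ via the known interior H\"older-gradient regularity for bounded weak solutions, contradicting the alleged failure of \eqref{2.1}. The only cosmetic differences are that you phrase the time-derivative bound in the dual space $W^{-1,p'}$ and invoke Minty's trick explicitly, whereas the paper quotes pointwise Lebesgue-space bounds on $(u_k)_t$ from \cite{L08} and \cite{AMS04} according to the range of $p$ and leaves the identification of the nonlinear limit implicit; both variants feed the same Simon compactness theorem and lead to the same conclusion.
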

\begin{proof}
We argue by contradiction. Thus, for an $\varepsilon_0>0$ there is a sequence
$$
u_k\in C_{\loc}(-1, 0; L^2(B_1)) \cap L^p_{\loc}(-1, 0; W_{\loc}^{1, p}(B_1))
$$
and a sequence
$$
f_k\in L^{q, r}(Q_1)
$$
such that
\begin{equation}\label{2.2}
(u_k)_t - \Div(|\nabla u_k|^{p-2}\nabla u_k)  = f_k\quad \mbox{in} \quad Q_1
\end{equation}
with
\begin{equation}\label{2.3}
\|u_k\|_{L^{\infty}(Q_1)} \le 1 \ \ \mbox{and} \ \ \|f_k\|_{L^{q,r}(Q_1)} = o(1) \ \ \mbox{as} \ \  k \rightarrow \infty,
\end{equation}
and at the same time
\begin{equation}\label{2.4}
  \text{either} \quad \|u_k-\phi\|_{L^{\infty}(Q_{1/2})}>\varepsilon_0 \quad  \mbox{or}\quad\|\nabla(u_k-\phi)\|_{L^{\infty}(Q_{1/2})}>\varepsilon_0,
\end{equation}
for any $p$-caloric function in $Q_{1/2}$, i.e.
$$
\phi_t-\Div(|\nabla\phi|^{p-2}\nabla\phi)=0\,\,\textrm{ in }\,\,Q_{1/2}.
$$
We now fix a cutoff function $\xi \in C_0^{\infty}(Q_1, [0, 1])$ such that $\xi \equiv 1$ in $Q_{1/2}$ and $\xi \equiv 0$ on $Q_{9/10}^c$. From Proposition \ref{p2.1} we obtain
$$
\begin{array}{rcl}
 \|u_k\|_{V(Q_{1/2})} & \leq &\displaystyle \sup\limits_{-1<t<0}\int_{B_1} u_k^2\xi^p + \int_{-1}^{0} \int_{B_1} |\nabla u_k|^p \xi^p\\
  & \leq & \displaystyle \int_{-1}^{0} \int_{B_1} |u_k|^p(\xi^p+|\nabla \xi|^p)+ \int_{-1}^{0} \int_{B_1}u_k^2\xi^{p-1}|\xi_t|  + \|f_k\|_{L^{q, r}}\\
   & \leq & C(\xi,p,n) + \text{o}(1) \quad \text{as} \quad k \to \infty,
\end{array}
$$
where
$$
V(\Omega \times I):= L^{\infty}\left(I; L^2(\Omega)\right) \cap L^p\left(I;W^{1, p}(\Omega)\right),
$$
and $C>0$ is a constant depending only on $\xi$, $p$ and dimension. Therefore, up to a subsequence $u_k$ converges weakly in $V\left(Q_{1/2}\right)$. Hence,
$$
\|(u_k)_t\|_{L^{\ell, 1}(Q_{1/2})}\leq C,
$$
where $\ell:=\frac{p}{p-1}<p$, $p\geq2$ (see \cite{L08}), and
$$
\|(u_k)_t\|_{L^{2}(Q_{1/2})} \leq C,
$$
when $\max\left\{1,\frac{2n}{n+2}\right\}<p<2$ (see \cite{AMS04}).
Making use of the embedding
$$
  W^{1, p} \hookrightarrow L^p \subset L^{\ell},
$$
whenever $p\geq2$ and
$$
L^2 \hookrightarrow L^{p},
$$
whenever $\max\left\{1,\frac{2n}{n+2}\right\}<p<2$, we deduce (see \cite[Corollary 4]{S87}) that
\begin{equation}\label{2.5}
u_k\rightarrow\phi\,\,\textrm{ strongly in }\,\,L^p(Q_{1/2}),
\end{equation}
for a function $\phi$. Using \eqref{2.3} and \eqref{2.5} and passing to the limit in \eqref{2.2}, one concludes that $\phi$ is a $p$-caloric function. Moreover, since $u_k$ is a bounded weak solution of \eqref{2.2}, then its spacial gradient is locally H\"{o}lder continuous (see \cite{BC04, C91, D93, DF84, DF85, DF285, DiBUV02, LSU68, L96, W86}), implying that the convergence $u_k\rightarrow\phi$ is locally uniform in $C^1$. Therefore,
$$
\max\left\{\|u_k-\phi\|_{L^{\infty}(Q_{1/2})},\, \|\nabla(u_k-\phi)\|_{L^{\infty}(Q_{1/2})}\right\} \to 0 \quad \text{as} \quad k \to \infty,
$$
which contradicts to \eqref{2.4}.
\end{proof}

\begin{remark}\label{r2.1}
Note that if $u$ is any weak solution of \eqref{1.1} in $Q_1$, then it is possible to normalize it in such a way, that the normalized function satisfies conditions of Lemma \ref{l2.1}. More precisely, for a $\delta>0$ and $s>0$ fixed, there exists positive constant $\mu=\mu(\delta,s)$ such that the function
$$
v(x,t):= \mu^{s}u(\mu^{s}x,\mu^{\tau}t),
$$
satisfies the conditions of Lemma \ref{2.1}, where $\tau:=2s(p-1)>0$.
\end{remark}

To see this, observe that
$$
v_t-\Delta_p v  = \mu^{s+\tau}u_t(\mu^{s}x,\mu^{\tau}t)- \mu^{(2p-1)s}(\Delta_p u)(\mu^{s}x,\mu^{\tau}t).
$$
Since $\tau=2s(p-1)$, then
$$
\tau+s=(2p-1)s.
$$
On the other hand, $v$ is a weak solution of
$$
v_t-\Div(|\nabla v|^{p-2}\nabla v) = \mu^{(2p-1)s}f(\mu^{s}x,\mu^{(2p-1)s}t)=:g(x,t).
$$
We estimate
$$
\|g\|_{L^{q,r}(Q_1)} \le\mu^{(2p-1)s-\left(\frac{sn}{q}+\frac{(2p-1)s}{r}\right)}\|f\|_{L^{q,r}(Q_{\mu})}.
$$
Set
$$
\begin{array}{rcl}
  \kappa & := & (2p-1)s-\left(\frac{sn}{q}+\frac{(2p-1)s}{r}\right) \\
   & = & s\left[(p-1)\left(1-\frac{1}{r}\right)+\frac{1}{r}\right] + sp\left[1-\left(\frac{n}{pq}+\frac{1}{r}\right)\right].
\end{array}
$$
Using the minimal integrability condition from \eqref{cc}, we point out that $\kappa>0$ for any $s>0$. Therefore, for $\delta>0$ fixed, if we choose
$$
0<\mu<\min\left\{ 1, \left(\frac{1}{\|u\|_{L^{\infty}(Q_1)}}\right)^{\frac{1}{s}},\,\left(\frac{\delta}{\|f\|_{L^{q, r}(Q_1)}}\right)^{\frac{1}{\kappa}}\right\},
$$
then $\|v\|_{L^{\infty}(Q_1)} \leq 1$ and $\|g\|_{L^{q,r}(Q_1)}\le\delta$.

Next, we analyse some aspects of scaling factor. Set
$$
\hat{\alpha} :=\frac{1-\left(\frac{n}{q}+\frac{2}{r}\right)}{p\left[1-\left(\frac{n}{pq}+\frac{1}{r}\right)\right]-\left[1-\left(\frac{n}{q}+\frac{2}{r}\right)\right]}.
$$
Observe that $\alpha \leq \hat{\alpha}$, where is defined by \eqref{1.2}. Now, as in \eqref{1.3} we consider
$$
\theta(\alpha) = 2+(2-p)\log_{\lambda}(\lambda^{\alpha} +|\nabla u(0,0)|).
$$
By assuming that $|\nabla u(0, 0)| \leq \lambda^{\alpha}$ for $\lambda \ll 1$ we have
$$
\lambda^{\hat{\alpha}} \leq \lambda^{\hat{\alpha}} + |\nabla u(0, 0)| \leq \lambda^{\alpha} + |\nabla u(0, 0)|\leq 1.
$$
Hence,
$$
\hat{\alpha} \geq \log_{\lambda}(\lambda^{\hat{\alpha}} +|\nabla u(0, 0)|) \geq \log_{\lambda}(\lambda^{\alpha} +|\nabla u(0, 0)|) \geq 0.
$$
For $\max\left\{1, \frac{2n}{n+2}\right\}<p\leq 2$ we have the following
$$
2\leq \theta(\alpha) \leq  2+(2-p)\hat{\alpha}.
$$
On the other hand, if $p>2$ we obtain
$$
2+(2-p)\hat{\alpha} \leq \theta(\alpha) \leq 2.
$$
Therefore,
$$
\min\{2, 2+(2-p)\hat{\alpha}\} \leq \theta(\alpha) \leq \max\{2, 2+(2-p)\hat{\alpha}\}.
$$
Moreover, it is easy to check (for $p> 2$) that
$$
2+(2-p)\hat{\alpha} = \frac{1+\frac{2}{p-2}+\frac{n}{q}}{1-\frac{1}{r}+\frac{1}{p-2}} \in (1, 2).
$$
In other words, for $p>2$
$$
1< \frac{1+\frac{2}{p-2}+\frac{n}{q}}{1-\frac{1}{r}+\frac{1}{p-2}} \leq \theta(\alpha) \leq 2.
$$
Finally, for $\max\left\{1, \frac{2n}{n+2}\right\}<p\leq 2$ we have
$$
2\leq 2+(2-p)\hat{\alpha} \leq 3.
$$
Consequently, for $\max\left\{1, \frac{2n}{n+2}\right\}<p\leq 2$
$$
2 \leq \theta(\alpha)\leq 3.
$$

\section{Sharp regularity estimates}\label{s3}

In this section we prove the main result of the paper. Let $S_\rho^\alpha$ be the critical zone of solutions, i.e.
$$
S_{\rho}^{\alpha}(Q_1):=\left\{(x, t) \in Q_1;\,|\nabla u(x, t)|\leq\rho^{\alpha}\right\},
$$
where $\rho>0$ is small, and $\alpha$ is defined by \eqref{1.2}. In order to proceed, we define corrected parabolic cylinder
$$
  \hat{Q}_{\rho^k}(x_0, t_0) := B_{\rho} \times \left(t_0-\rho^{\theta(\sigma + k-1)}, t_0\right],
$$
where $k \in \mathbb{N}$ and
$$
\sigma := \min\left\{1, \frac{2}{2+(2-p)\hat{\alpha}}\right\}.
$$
\begin{remark}\label{r3.1}
If $\max\left\{1, \frac{2n}{n+2}\right\}<p\leq 2$, then
$$
\hat{Q}_{\rho}(x_0, t_0)= Q_{\rho}(x_0, t_0).
$$
If $p>2$, then $\hat{Q}_{\rho}(x_0, t_0) \subset Q_{\rho}(x_0, t_0)$. Also, for any $\max\left\{1, \frac{2n}{n+2}\right\}<p< \infty$ one has $\sigma\theta\geq 2$.
\end{remark}
The following theorem is the main result of this paper. We will prove it by analysing sharp estimates inside and outside of the critical zone.
\begin{theorem}\label{t3.1}
Let $K\subset\subset Q_1$, $u$ be a bounded weak solution of \eqref{1.1} in $Q_1$ and let \eqref{cc} hold. If $(x_0, t_0) \in S_\rho^\alpha(K)$, $\rho>0$, then $u$ is $C^{1+\alpha}$ (in the parabolic) at $(x_0, t_0)$, i.e., there exists a constant $M>0$ such that
$$
\displaystyle \sup_{\hat{Q}_{\rho}(x_0, t_0) \cap K} \left|u(x,t)-u(x_0,t_0)-\nabla u(x_0,t_0)\cdot (x-x_0)\right|\leq M\rho^{1+\alpha},
$$
for $\rho>0$ small enough, and $\alpha$ is defined by \eqref{1.2}. Moreover, if $p>2$ and $(x_0,t_0)\notin S_\rho^\alpha(K\cap Q_1)$, then the conclusion of the theorem is still true.
\end{theorem}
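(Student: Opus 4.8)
The plan is to deduce the displayed inequality from the oscillation control of Theorem \ref{t3.2} (of which it is the special case obtained by specializing the information on the gradient), via a normalization, a $p$-caloric approximation step, and a discrete iteration tuned to the intrinsic geometry of \eqref{1.1}. After translating $(x_0,t_0)$ to the origin, I would first normalize through Remark \ref{r2.1}: multiplying $u$ by a small constant and applying the parabolic dilation $v(x,t)=\mu^s u(\mu^s x,\mu^\tau t)$ reduces us to $\|u\|_{L^\infty(Q_1)}\le 1$ and $\|f\|_{L^{q,r}(Q_1)}\le\delta$, for $\delta>0$ to be fixed; since the class of affine correctors and the exponent $1+\alpha$ transform covariantly under this dilation, it is enough to prove the estimate in this reduced setting.

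The engine is a one-step improvement lemma (Lemma \ref{l3.1}). Given $\varepsilon>0$, choose $\delta=\delta(n,p,\varepsilon)$ from Lemma \ref{l2.1}, so that $u$ is within $\varepsilon$ in $C^1(Q_{1/2})$ of a $p$-caloric function $\phi$. Since $p$-caloric functions are locally of class $C^{1+\gamma}$ (in the parabolic sense) for every $\gamma<\alpha_{\mathrm{H}}$, the affine function $\ell(x):=\phi(0,0)+\nabla\phi(0,0)\cdot x$ satisfies $\sup_{Q_r}|\phi-\ell|\le C\,r^{1+\gamma}$ for $0<r\le 1/4$. Combining the two bounds and using $\alpha<\alpha_{\mathrm{H}}$, I would fix a universal $\rho\in(0,1/2)$ with $C\rho^{1+\gamma}\le\tfrac12\rho^{1+\alpha}$, and then $\varepsilon$ (hence $\delta$) so small that $\varepsilon\le\tfrac12\rho^{1+\alpha}$; this produces $\sup_{\hat Q_\rho}|u-\ell|\le\rho^{1+\alpha}$ (using $\hat Q_\rho\subset Q_\rho$, Remark \ref{r3.1}) with $|\nabla\ell|\le C_0$ universal and $|\nabla\ell-\nabla u(0,0)|\le\varepsilon$.

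The remaining step is to iterate. Inductively I would produce affine functions $\ell_k(x)=a_k+b_k\cdot x$ (with $\ell_0\equiv 0$) such that
$$
\sup_{\hat Q_{\rho^k}}|u-\ell_k|\le\rho^{k(1+\alpha)},\qquad |a_k-a_{k-1}|\le C\rho^{k},\qquad |b_k-b_{k-1}|\le C\rho^{(k-1)\alpha}.
$$
Given $\ell_{k-1}$, one rescales the cylinder $\hat Q_{\rho^{k-1}}$ back to $Q_1$ — using the \emph{corrected} time-height dictated by the intrinsic geometry, i.e. the height appearing in the definition of $\hat Q_{\rho^{k-1}}$ — and lets $u_k$ be $\rho^{-(k-1)(1+\alpha)}(u-\ell_{k-1})$ read in these coordinates. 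Because $\ell_{k-1}$ is affine, $u_k$ is again a bounded weak solution of an equation of $p$-Laplacian type, now with gradient shifted by a vector of size $\le C\rho^{(k-1)\alpha}$ and a rescaled source $f_k$; the one-step lemma then furnishes the next affine increment. The crucial verification is $\|f_k\|_{L^{q,r}(Q_1)}\le\delta$: the power of $\rho$ generated by rescaling $f$ — computed as for $\kappa$ in Remark \ref{r2.1}, but now carrying the intrinsic time dilation and the amplification factor $\rho^{-(k-1)(1+\alpha)}$ — is nonnegative \emph{precisely because} $\alpha\le\hat\alpha$, i.e. because of the choice \eqref{1.2}, while \eqref{cc} together with the definition \eqref{1.3} of $\theta$ keeps the (shifted) gradient-dependent diffusivity of $u_k$ uniformly parabolic in the operative range. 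Telescoping the increments, $b_k\to\nabla u(0,0)$ and $a_k\to u(0,0)$, and interpolating between the discrete scales $\rho^{k+1}\le r\le\rho^k$ upgrades the estimate to the claimed continuous bound, with $M=M(n,p,q,r,\alpha_{\mathrm{H}},\|f\|_{L^{q,r}})$.

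Finally, for $p>2$ and $(x_0,t_0)\notin S_\rho^\alpha$ one has $|\nabla u(x_0,t_0)|>\rho^\alpha$, so by local $C^1$ continuity of $\nabla u$ the coefficient $|\nabla u|^{p-2}$ is bounded above and below in a neighbourhood; the equation is then uniformly parabolic there, and classical $C^{1,\beta}$ estimates for non-degenerate quasilinear parabolic equations, together with the same parabolic scaling, give the conclusion without any intrinsic correction. The main difficulty is the critical-zone iteration: the correctors $\ell_k$ are not solutions, so each application of the approximation lemma leaves an error that must be absorbed into the next scale, while simultaneously the correct time-dilation changes from step to step according to the a priori unknown magnitude of $\nabla u$ at the shifting center. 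Reconciling these two effects is exactly what forces the $\theta$-intrinsic scaling \eqref{1.3} and the corrected cylinders $\hat Q_{\rho^k}$ — whose heights must be further shrunk when $p>2$ so that $\sigma\theta\ge 2$ and the rescaled domains still contain $Q_1$ — and checking that this bookkeeping stays consistent at every step, with $\|f_k\|_{L^{q,r}}\le\delta$ preserved throughout, is the heart of the argument.
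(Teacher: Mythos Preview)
Your iteration has a genuine gap at exactly the point the paper flags as the main obstacle. When you subtract an affine function $\ell_{k-1}(x)=a_{k-1}+b_{k-1}\cdot x$ and rescale, the function $u_k$ does \emph{not} solve a $p$-Laplacian equation: since $\Delta_p$ is nonlinear in the gradient, $w=u-\ell_{k-1}$ satisfies $w_t-\Div\bigl(|\nabla w+b_{k-1}|^{p-2}(\nabla w+b_{k-1})\bigr)=f$, and after the parabolic dilation the vector that sits inside the nonlinearity becomes $B_k=\rho^{-(k-1)\alpha}b_{k-1}$. Your claim that this shift has size $\le C\rho^{(k-1)\alpha}$ is incorrect; since $b_{k-1}\to\nabla u(0,0)$, one has $|B_k|\sim\rho^{-(k-1)\alpha}|\nabla u(0,0)|$, which blows up as $k\to\infty$ unless $\nabla u(0,0)=0$. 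Consequently neither Lemma~\ref{l2.1} nor Lemma~\ref{l3.1} (both stated and proved only for the genuine $p$-Laplacian) can be invoked at step $k$, and the compactness argument behind them would not give uniform constants for operators with unbounded drift in the gradient. This is precisely why the paper remarks, right after Lemma~\ref{l3.1}, that ``a priori we do not know the equation which is satisfied by $\frac{(u-L_k)(\lambda^k x,\lambda^{k\theta}t)}{\lambda^{k(1+\alpha)}}$.''

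The paper's fix is to abandon affine correctors in the iteration and subtract only the constant $u(0,0)$: the rescaled function $v_k$ in Lemma~\ref{l3.2} then solves the \emph{exact} evolutionary $p$-Laplacian with a rescaled source, so Lemma~\ref{l3.1} applies verbatim at every step. The price is that the oscillation bound picks up an additive term $|\nabla u(0,0)|\sum_{j=0}^{k-1}\lambda^{k+j\alpha}$, which is carried along and summed in Theorem~\ref{t3.2}; the critical-zone hypothesis $|\nabla u(0,0)|\le\rho^{\alpha}$ is used only at the very last line to absorb this term into $M\rho^{1+\alpha}$. The intrinsic exponent $\theta$ in \eqref{1.3} is chosen exactly so that this constant-subtraction rescaling keeps both the equation and the smallness of $\|f_k\|_{L^{q,r}}$ intact. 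Your treatment of the non-critical case for $p>2$ is broadly correct in spirit, but the paper also needs an intermediate step (Case~1, with $\rho\in[\tau,\lambda)$ where $\tau=|\nabla u(0,0)|^{1/\alpha}$) that again relies on Theorem~\ref{t3.2}, not on a direct uniformly-parabolic estimate.
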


\subsection{Sharp estimates in the critical zone}

Using an iterative argument, we prove the desired regularity estimate in the critical zone.
The estimate in Lemma \ref{l2.1} can be further improved up to the sharp exponent in our compatibility regime \eqref{cc}. The following lemma serves that purpose and provides the first step of such iteration.
\begin{lemma}\label{l3.1}
Let $u$ be a weak solution of \eqref{1.1} in $Q_1$ with $\|u\|_{L^\infty(Q_1)}\leq1$. There exist $\delta>0$ and $\lambda\in \left(0,\frac{1}{2}\right)$ such that if $\|f\|_{L^{q, r}(Q_1)}\leq\delta$, then
$$
\displaystyle\sup_{\hat{Q}_{\lambda}} \left|u(x, t)-u(0, 0)-\nabla u(0,0)\cdot x\right|\leq \lambda^{1+\alpha},
$$
where $\alpha$ is defined by \eqref{1.2}.
\end{lemma}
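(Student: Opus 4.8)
The plan is to run a single step of geometric tangential approximation: use Lemma~\ref{l2.1} to bring $u$ into $C^1$--proximity with a $p$-caloric function $\phi$ on $Q_{1/2}$, then use the sharp interior regularity of $\phi$ to replace it by its first order Taylor polynomial at the origin, and finally calibrate the free parameters $\lambda$, $\varepsilon$ and $\delta$ (in that order of dependence) so that both errors stay below $\tfrac12\lambda^{1+\alpha}$.

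First I would fix an exponent $\gamma$ with $\alpha<\gamma<\alpha_{\mathrm H}$, which is possible because $\alpha<\alpha_{\mathrm H}$ by \eqref{1.2}. Since bounded $p$-caloric functions are locally $C^{1+\gamma}$ in the parabolic sense, with estimates depending only on $n$, $p$, $\gamma$ and the sup norm, there is a universal $C_0=C_0(n,p,\gamma)$ so that any $p$-caloric $\phi$ with $\|\phi\|_{L^\infty(Q_{1/2})}\le 2$ satisfies
\[
\sup_{\hat Q_{r}}\bigl|\phi(x,t)-\phi(0,0)-\nabla\phi(0,0)\cdot x\bigr|\le C_0\,r^{1+\gamma}
\]
for all small $r$. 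It matters that the oscillation is taken over the \emph{corrected} cylinder $\hat Q_r$: by Remark~\ref{r3.1} its time--height exponent $\sigma\theta$ is at least $2$, so the contribution of the time direction to the oscillation of $\phi$ (which scales like the square root of the height) is absorbed into the spatial order $r^{1+\gamma}$. This is the precise reason the correction $\sigma$ is introduced, and the place where the singular range $\max\{1,2n/(n+2)\}<p\le 2$ (where $\hat Q_r=Q_r$) and the degenerate range $p>2$ must be distinguished.

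Next I would choose $\lambda\in(0,\tfrac14)$ small enough that both $C_0\,\lambda^{\gamma-\alpha}\le\tfrac12$ and $\hat Q_\lambda\subset Q_{1/2}$ hold (the inclusion being immediate from $\lambda<\tfrac14$, from $\theta>0$, and from the bounds on $\theta$ collected in Section~\ref{s2}), then pick $\varepsilon>0$ with $3\varepsilon\le\tfrac12\lambda^{1+\alpha}$, and finally take $\delta=\delta(n,p,\varepsilon)$ from Lemma~\ref{l2.1}. If $\|f\|_{L^{q,r}(Q_1)}\le\delta$, Lemma~\ref{l2.1} produces a $p$-caloric $\phi$ on $Q_{1/2}$ with $\|u-\phi\|_{L^\infty(Q_{1/2})}<\varepsilon$ and $\|\nabla(u-\phi)\|_{L^\infty(Q_{1/2})}<\varepsilon$; in particular $\|\phi\|_{L^\infty(Q_{1/2})}\le 2$, $|\phi(0,0)-u(0,0)|<\varepsilon$ and $|\nabla\phi(0,0)-\nabla u(0,0)|<\varepsilon$. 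Setting $\ell(x):=u(0,0)+\nabla u(0,0)\cdot x$ and $\ell_\phi(x):=\phi(0,0)+\nabla\phi(0,0)\cdot x$, for $(x,t)\in\hat Q_\lambda$, where $|x|\le\lambda<1$, the triangle inequality gives
\[
|u(x,t)-\ell(x)|\le\|u-\phi\|_{L^\infty(Q_{1/2})}+\sup_{\hat Q_\lambda}|\phi-\ell_\phi|+\sup_{|x|\le\lambda}|\ell_\phi(x)-\ell(x)|,
\]
and the three terms are bounded by $\varepsilon$, by $C_0\lambda^{1+\gamma}\le\tfrac12\lambda^{1+\alpha}$, and by $2\varepsilon$, respectively; since $3\varepsilon\le\tfrac12\lambda^{1+\alpha}$, the sum is at most $\lambda^{1+\alpha}$, which is the assertion.

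What I expect to be routine is the uniform Taylor estimate for $p$-caloric functions (a standard consequence of the interior $C^{1+\gamma}$ theory, cited in the excerpt for the homogeneous problem, together with a rescaling argument), the nesting $\hat Q_\lambda\subset Q_{1/2}$, and the bookkeeping above. The genuinely delicate point, and the one I would write out in full, is the compatibility between the intrinsic $\theta$-- and $\sigma$--adjusted time geometry of $\hat Q_\lambda$ and the natural parabolic scaling of the comparison function: one must ensure that the oscillation of $\phi$ in time over $\hat Q_\lambda$ is controlled at order $\lambda^{1+\gamma}$, and not only at order $\lambda^{\theta(1+\gamma)/2}$, which would be insufficient for $p>2$ on the uncorrected cylinder $Q_\lambda$ since there $\theta<2$. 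It is exactly the inequality $\sigma\theta\ge 2$ of Remark~\ref{r3.1} that repairs this, which is why the corrected cylinders are introduced; once this is settled the argument closes, and the same approximation scheme --- applied in the next stages to the flattened source term produced by the intrinsic rescaling --- delivers the subsequent steps of the iteration.
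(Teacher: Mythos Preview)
Your proposal is correct and follows essentially the same route as the paper: approximate by a $p$-caloric $\phi$ via Lemma~\ref{l2.1}, control $|\phi-\ell_\phi|$ on $\hat Q_\lambda$ by the $C^{1+\alpha_{\mathrm H}}$ regularity of $\phi$ together with $\sigma\theta\ge 2$, and absorb the three $\varepsilon$-terms by choosing $\lambda$ first and then $\varepsilon$, $\delta$. The only cosmetic difference is that you interpose an auxiliary exponent $\gamma\in(\alpha,\alpha_{\mathrm H})$, whereas the paper works directly with $\alpha_{\mathrm H}$ and fixes $\lambda$ by the condition $C\lambda^{\alpha_{\mathrm H}-\alpha}\le\tfrac12$.
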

\begin{proof}
Let $\varepsilon>0$. From Lemma \ref{l2.1} we know that there exists a $p$-caloric function $\phi$ and $\delta>0$, such that whenever $\|f\|_{L^{q, r}(Q_1)}\leq\delta$, then \eqref{2.1} holds. Taking $\lambda\in \left(0,\frac{1}{2}\right)$ (to be chosen \textit{a posteriori}) and $(x,t)\in \hat{Q}_\lambda$, and from H\"{o}lder gradient continuity (cf. \cite{BC04, C91, D93, DF84, DF85, DF285, DiBUV02, LSU68, L96, W86}) we then estimate
\begin{eqnarray}\label{3.1}
&&\displaystyle \sup_{\hat{Q}_{\lambda}} \left|u(x, t)-u(0, 0)-\nabla u(0, 0)\cdot x\right|\nonumber\\
&\leq& \displaystyle\sup_{\hat{Q}_{\lambda}}\left|\phi(x,t)-\phi(0,0)-\nabla\phi(0, 0)\cdot x\right|\nonumber \\
&+& \displaystyle\sup_{\hat{Q}_{\lambda}} |(u-\phi)(x,t)|+|(u-\phi)(0,0)|+|\nabla(u-\phi)(0,0)|\nonumber \\
&\leq & \displaystyle C\sup_{\hat{Q}_{\lambda}} \left(|x|+ \sqrt{|t|}\right)^{1+\alpha_H} + 3\varepsilon \nonumber \\
&\leq& C\lambda^{(1+\alpha_H)\min\left\{1, \frac{\theta\sigma}{2}\right\}}+3\varepsilon\nonumber\\
&\leq & \displaystyle C \lambda^{1+\alpha_H} + 3\varepsilon,
\end{eqnarray}
where $C>0$ is a universal constant. By fixing
\begin{equation}\label{3.2}
\lambda\in \left(0, \min\left\{\frac{1}{2}, \left(\frac{1}{2 C}\right)^{\frac{1}{\alpha_H-\alpha}}\right\}\right),
\end{equation}
and choosing $\varepsilon\in \left(0,\frac{1}{6} \lambda^{1+\alpha}\right)$ in \eqref{3.1}, we conclude the proof.
\end{proof}

Note that the previous lemma is not enough to proceed with an iterative scheme, because a priori we do not know the equation which is satisfied by
$$
 \frac{(u-L_k)(\lambda^kx, \lambda^{k\theta}t)}{\lambda^{k(1+\alpha)}},
$$
where $\{L_k\}_{k\in\mathbb{N}}$ is sequence of affine functions (compare with \cite{ART15}, \cite[Theorem 2]{DD18} and \cite[Section 5 and 6]{DT17} in the fully nonlinear setting). Nevertheless, it provides  the following information on the oscillation of $u$ in $\hat{Q}_{\lambda}$.

\begin{corollary}\label{c3.1}
Under the conditions of Lemma \ref{3.1} one has
$$
\displaystyle \sup_{\hat{Q}_{\lambda}}\left|u(x,t)-u(0,0)\right|\leq\lambda^{1+\alpha}+\lambda|\nabla u(0,0)|,
$$
where $\lambda$ is a constant satisfying \eqref{3.2}, and $\alpha$ is defined by \eqref{1.2}.
\end{corollary}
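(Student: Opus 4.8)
The plan is to deduce Corollary \ref{c3.1} directly from Lemma \ref{l3.1} by a triangle-inequality argument, using only the linear term to control the extra oscillation. Concretely, for $(x,t)\in\hat{Q}_\lambda$ we write
$$
|u(x,t)-u(0,0)| \le |u(x,t)-u(0,0)-\nabla u(0,0)\cdot x| + |\nabla u(0,0)\cdot x|,
$$
and then estimate the two terms separately.

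First I would bound the first term on the right by $\lambda^{1+\alpha}$ using Lemma \ref{l3.1} verbatim, since $\hat{Q}_\lambda$ here is exactly the cylinder in that lemma and $(x,t)\in\hat{Q}_\lambda$ implies the supremum bound applies pointwise. Next I would bound the second term: by the Cauchy--Schwarz inequality $|\nabla u(0,0)\cdot x|\le |\nabla u(0,0)|\,|x|$, and since $(x,t)\in\hat{Q}_\lambda=B_\lambda\times(-\lambda^{\theta(\sigma)},0]$ (or the corrected cylinder, in any case with spatial part $B_\lambda$) we have $|x|<\lambda$, hence $|\nabla u(0,0)\cdot x|\le \lambda\,|\nabla u(0,0)|$. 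Taking the supremum over $\hat{Q}_\lambda$ of the sum gives
$$
\sup_{\hat{Q}_\lambda}|u(x,t)-u(0,0)| \le \lambda^{1+\alpha} + \lambda|\nabla u(0,0)|,
$$
which is exactly the claimed estimate, and $\lambda$ inherits the constraint \eqref{3.2} and $\alpha$ the definition \eqref{1.2} from Lemma \ref{l3.1}.

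There is essentially no obstacle here: the corollary is a purely elementary consequence of the lemma, the only point requiring a word of care being that the spatial projection of $\hat{Q}_\lambda$ is the ball $B_\lambda$ (which holds by the definition $\hat{Q}_{\rho^k}(x_0,t_0)=B_\rho\times(t_0-\rho^{\theta(\sigma+k-1)},t_0]$, so that for $k=1$, $x_0=t_0=0$ the spatial part is $B_\rho$ with $\rho=\lambda$). If anything deserves emphasis, it is simply that this oscillation estimate, unlike the full $C^{1+\alpha}$ expansion, \emph{does} scale correctly under the intrinsic rescaling $(u-u(0,0))/\lambda^{1+\alpha}$ modulo the gradient correction term, which is why it, rather than Lemma \ref{l3.1} itself, is the quantity iterated in the sequel.
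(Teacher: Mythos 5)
Your proof is correct and follows exactly the same route as the paper's: split by the triangle inequality into the linearized remainder (bounded by Lemma~\ref{l3.1}) and the linear term (bounded by $\lambda|\nabla u(0,0)|$ since the spatial slice of $\hat{Q}_\lambda$ is $B_\lambda$). Nothing to add.
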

\begin{proof}
Using Lemma \ref{l3.1} we estimate
{\small{
$$
\begin{array}{rcl}
  \displaystyle \sup_{\hat{Q}_{\lambda}} \left|u(x,t)-u(0,0)\right| & \leq &  \displaystyle \sup_{\hat{Q}_{\lambda}} \left|u(x,t)-u(0,0)-\nabla u(0,0)\cdot x\right|+\sup_{\hat{Q}_{\lambda}}|\nabla u(0,0)\cdot x| \\
   & \leq & \lambda^{1+\alpha}+\lambda|\nabla u(0, 0)|.
\end{array}
$$}}
\end{proof}

In order to obtain a precise control on the influence of magnitude of the gradient of $u$, we iterate solutions (using Corollary \ref{c3.1}) in corrected $\lambda$-adic cylinders.

\begin{lemma}\label{l3.2} Under the assumptions of Lemma \ref{l3.1} one has
\begin{equation}\label{3.3}
\displaystyle\sup_{\hat{Q}_{\lambda^k}}\left|u(x,t)-u(0,0)\right|\leq\lambda^{k(1+\alpha)}+|\nabla u (0,0)|\sum_{j=0}^{k-1}\lambda^{k+j\alpha},
\end{equation}
where $\lambda$ is a constant satisfying \eqref{3.2} and $\alpha$ is defined by \eqref{1.2}.
\end{lemma}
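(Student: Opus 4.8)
The plan is to prove \eqref{3.3} by induction on $k$, using Corollary \ref{c3.1} as the base case and an iterative rescaling as the inductive step. For $k=1$, estimate \eqref{3.3} is exactly the content of Corollary \ref{c3.1}. Assuming the estimate holds for some $k \geq 1$, I would introduce the rescaled function
$$
v(x,t) := \frac{u(\lambda^k x, \lambda^{\theta(\sigma+k-1)} t) - u(0,0)}{\lambda^{k(1+\alpha)} + |\nabla u(0,0)| \sum_{j=0}^{k-1} \lambda^{k+j\alpha}}
$$
on $Q_1$, designed so that the inductive hypothesis gives precisely $\|v\|_{L^\infty(Q_1)} \leq 1$. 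The point of the corrected cylinder $\hat{Q}_{\rho^k}$ with exponent $\theta(\sigma+k-1)$ — rather than a naive $\lambda^{k\theta}$ scaling in time — is that this is exactly the height adjustment (see Remark \ref{r3.1} and the inequality $\sigma\theta \geq 2$) that makes $v$ again a weak solution of an equation of the same type \eqref{1.1}, with a new source term $\tilde{f}$ whose $L^{q,r}$ norm is controlled. I would compute that $\tilde{f}$ picks up a scaling factor with a strictly positive exponent (the same $\kappa > 0$ computation as in Remark \ref{r2.1}, combined with the fact that $\lambda^{k(1+\alpha)} + |\nabla u(0,0)|\sum \lambda^{k+j\alpha} \geq \lambda^{k(1+\alpha)}$ in the denominator only helps), so that by taking $\lambda$ (equivalently $\delta$) small enough from the start, $\|\tilde f\|_{L^{q,r}(Q_1)} \leq \delta$.

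Having verified that $v$ satisfies the hypotheses of Lemma \ref{l3.1}, I would apply that lemma (in its corollary form, Corollary \ref{c3.1}) to $v$ to obtain
$$
\sup_{\hat{Q}_\lambda} |v(x,t) - v(0,0)| \leq \lambda^{1+\alpha} + \lambda |\nabla v(0,0)|.
$$
Scaling back, $\hat{Q}_\lambda$ for $v$ corresponds to $\hat{Q}_{\lambda^{k+1}}$ for $u$, and $|\nabla v(0,0)| = \lambda^{k\alpha} |\nabla u(0,0)| / \big(\lambda^{k(1+\alpha)} + |\nabla u(0,0)|\sum_{j=0}^{k-1}\lambda^{k+j\alpha}\big)$ after accounting for the spatial scaling factor $\lambda^k$ and the overall normalization constant. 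Substituting and multiplying through by the normalization constant yields
$$
\sup_{\hat{Q}_{\lambda^{k+1}}} |u(x,t) - u(0,0)| \leq \lambda^{(k+1)(1+\alpha)} + |\nabla u(0,0)| \sum_{j=0}^{k-1} \lambda^{k+1+j\alpha} + \lambda^{k+1+k\alpha} |\nabla u(0,0)|,
$$
and the last two terms combine to $|\nabla u(0,0)| \sum_{j=0}^{k} \lambda^{k+1+j\alpha}$, which is exactly \eqref{3.3} with $k$ replaced by $k+1$. This closes the induction.

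The main obstacle, and the step I would spend the most care on, is verifying that the rescaled function $v$ genuinely solves an equation of $p$-Laplacian type \emph{with the same structural constants} and with a source term that remains $\delta$-small — in particular checking that the interplay between the $(1+\alpha)$-homogeneity of the normalization, the $\lambda^k$ spatial scaling, and the $\theta(\sigma+k-1)$ temporal scaling is consistent with the degree of homogeneity of $\Div(|\nabla u|^{p-2}\nabla u)$. This is precisely where the definition of $\theta$ in \eqref{1.3} and the choice of $\sigma$ are forced: one needs the temporal exponent to match $2 + (2-p)(\text{something of order }\alpha)$ so that the parabolic scaling of the degenerate/singular diffusion is respected, and the corrected cylinder exists exactly to absorb the discrepancy between $2$ and $\theta$ when $p \neq 2$. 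A secondary technical point is confirming that the denominators never vanish and that the bound $\|v\|_{L^\infty(Q_1)} \leq 1$ really does follow from the inductive hypothesis on the set $\hat{Q}_{\lambda^k}$ translated to $Q_1$ under the rescaling; this is routine but must be done with the corrected geometry in mind.
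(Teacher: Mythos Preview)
Your overall strategy --- induction via Corollary \ref{c3.1} after rescaling --- is exactly the paper's. The gap is in the temporal rescaling, which is precisely the point you flag as ``the main obstacle'' but then resolve incorrectly. With your choice $t \mapsto \lambda^{\theta(\sigma+k-1)} t$ for the \emph{fixed} $\theta$ of \eqref{1.3}, the function $v$ does \emph{not} solve $v_t - \Delta_p v = \tilde f$. Computing $v_t$ and $\Delta_p v$ shows that the two terms share a common factor only when
\[
\lambda^{(\text{time exponent})} \;=\; \frac{\lambda^{kp}}{N_k^{\,p-2}}, \qquad N_k := \lambda^{k(1+\alpha)} + |\nabla u(0,0)|\sum_{j=0}^{k-1}\lambda^{k+j\alpha},
\]
which forces the time exponent to depend on $k$ and on the full normalization $N_k$, not merely on $|\nabla u(0,0)|$ through \eqref{1.3}. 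The paper handles this by \emph{redefining} $\theta$ inside the proof as the $k$-dependent quantity
\[
\theta = 2 + \log_{\lambda^k}\Big(\lambda^{k\alpha} + |\nabla u(0,0)|\sum_{j=0}^{k-1}\lambda^{j\alpha}\Big)^{2-p},
\]
and rescales by $t \mapsto \lambda^{k\theta} t$; then $v_k$ is a genuine weak solution of $(v_k)_t - \Delta_p v_k = f_k$, and the inductive hypothesis still gives $\|v_k\|_{L^\infty} \le 1$ on the appropriate cylinder. The smallness $\|f_k\|_{L^{q,r}} \le \delta$ then follows not from the $\kappa$-computation of Remark \ref{r2.1} but from a direct estimate using this $k$-dependent $\theta$ together with the definition of $\alpha$ in \eqref{1.2}, which is exactly what makes the resulting exponent nonnegative.

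Once the time scaling is corrected this way, the remainder of your argument --- apply Corollary \ref{c3.1} to $v_k$, multiply through by $N_k$, and regroup the sum --- goes through as you describe. (Minor slip: $|\nabla v(0,0)| = \lambda^{k}\,|\nabla u(0,0)|/N_k$, not $\lambda^{k\alpha}|\nabla u(0,0)|/N_k$; your final combined expression is nevertheless correct.)
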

\begin{proof}
We argue by induction. When $k=1$, we have \eqref{3.3} from Lemma \ref{l3.1}. Suppose now that \eqref{3.3} holds for all the values of $l=1,2,\cdots,k$. Our aim is to prove it for $l=k+1$. Define $v_k: \hat{Q}_1:= B_1 \times (-\lambda^{\theta(\sigma-1)}, 0]$ given by
$$
\displaystyle v_k(x,t):=\frac{u(\lambda^k x, \lambda^{k\theta}t)-u(0,0)}{\lambda^{k(1+\alpha)}+|\nabla u(0,0)|\sum\limits_{j=0}^{k-1}\lambda^{k+j\alpha}},
$$
where $\theta>0$ is a constant to be chosen later. By induction hypothesis $\|v_k\|_{L^{\infty}(\hat{Q}_1)}\le 1$. Note also that $v_k(0,0)=0$,
$$
(v_k)_t(x,t)=\frac{\lambda^{k\theta} u_t(\lambda^kx, \lambda^{k\theta} t)}{\lambda^{k(1+\alpha)}+|\nabla u(0,0)|\sum\limits_{j=0}^{k-1}\lambda^{k+j\alpha}},
$$
$$
\nabla v_k(x,t)=\frac{\lambda^{k} \nabla u(\lambda^kx, \lambda^{k\theta} t)}{\lambda^{k(1+\alpha)}+|\nabla u (0,0)|\sum\limits_{j=0}^{k-1}\lambda^{k+j\alpha}}
$$
and
$$
\Delta_p v_k(x,t) = \frac{\lambda^{kp}(\Delta_p u)(\lambda^{k}x,\lambda^{k\theta} t)}{\left(\lambda^{k(1+\alpha)}+|\nabla u(0,0)|\sum\limits_{j=0}^{k-1}\lambda^{k+j\alpha}\right)^{p-1}}.
$$
Choosing
$$
\theta:=2+\log_{\lambda^k}\left(\lambda^{k\alpha} + |\nabla u(0,0)|\sum\limits_{j=0}^{k-1}\lambda^{j\alpha}\right)^{2-p},
$$
we obtain
$$
(v_k)_t-\Delta_p v_k=\frac{\lambda^{kp}f(\lambda^kx, \lambda^{k\theta}t)}{ \left(\lambda^{k(1+\alpha)}+|\nabla u (0,0)|\sum\limits_{j=0}^{k-1}\lambda^{k+j\alpha} \right)^{p-1}}=:f_k(x, t).
$$
We then estimate
$$
\begin{array}{rcl}
\|f_k\|_{L^{q, r}(\hat{Q}_1)}&=&\displaystyle\left(\int_{-\lambda^{\theta(\sigma-1)}}^{0}\left(\int_{B_1}|f_k(x, t)|^q\,dx\right)^{\frac{r}{q}}\,dt\right)^{\frac{1}{r}}\\
&= &\frac{\lambda^{k\left[1-\left(\frac{n}{q}+\frac{\theta}{r}\right)\right]}}{ \left(\lambda^{k\alpha}+|\nabla u (0,0)|\sum\limits_{j=0}^{k-1}\lambda^{j\alpha} \right)^{p-1}}\|f\|_{L^{q, r}\left(\hat{Q}_{\lambda^k}\right)} \\
&=&\frac{\lambda^{k\left[1-\left(\frac{n}{q}+\frac{2}{r}\right)\right]}}{ \left(\lambda^{k\alpha}+|\nabla u(0,0)|\sum\limits_{j=0}^{k-1}\lambda^{j\alpha} \right)^{p\left[1-\left(\frac{n}{pq}+\frac{1}{r}\right)\right]-\left[1-\left(\frac{n}{q}+\frac{2}{r}\right)\right]}}\|f\|_{L^{q, r}\left(\hat{Q}_{\lambda^k}\right)} \\
&\leq&\lambda^{k\left\{\left[1-\left(\frac{n}{q}+\frac{2}{r}\right)\right]-\alpha\left\{p\left[1-\left(\frac{n}{pq}+\frac{1}{r}\right)\right]-\left[1-\left(\frac{n}{q}+\frac{2}{r}\right)\right]\right\}\right\}}\|f\|_{L^{q, r}\left(Q_{1}\right)}\\
&\leq&\delta_0.
\end{array}
$$
Hence, one can apply Lemma \ref{l3.1} to $v_k$ and obtain
$$
\displaystyle \sup_{\hat{Q}_{\lambda}} \left|v_k(x, t)-v_k(0,0)\right|\leq\lambda^{1+\alpha}+\lambda|\nabla v_k(0,0)|,
$$
or else
$$
\displaystyle\sup_{\hat{Q}_{\lambda}}\frac{|u(\lambda^k x, \lambda^{k\theta} t)-u(0, 0)|}{\lambda^{k(1+\alpha)}+|\nabla u (0,0)|\sum\limits_{j=0}^{k-1}\lambda^{k+j\alpha}}\leq \lambda^{1+\alpha}+\frac{\lambda^{k+1}|\nabla u(0,0)|}{\lambda^{k(1+\alpha)}+|\nabla u(0,0)|\sum\limits_{j=0}^{k-1}\lambda^{k+j\alpha}},
$$
which, by scaling back provides
$$
  \displaystyle\sup_{\hat{Q}_{\lambda^{k+1}}}|u(x, t)-u(0,0)| \leq  \lambda^{(k+1)(1+\alpha)}+ |\nabla u(0,0)|\sum\limits_{j=0}^{k}\lambda^{k+1+j\alpha}.
$$
The latter is \eqref{3.3} for $k+1$.
\end{proof}
The next result leads to a sharp regularity estimate in the critical zone.
\begin{theorem}\label{t3.2}
Under the assumptions of Lemma \ref{l3.1} there exists a universal constant $M>1$ such that
$$
\displaystyle \sup_{\hat{Q}_{\rho}}|u(x,t)-u(0,0)|\leq M{\rho}^{1+\alpha}\left(1+|\nabla u(0,0)|\rho^{-\alpha}\right),\,\,\forall\rho\in(0,\lambda),
$$
where $\lambda$ is a constant satisfying \eqref{3.2}, and $\alpha$ is defined by \eqref{1.2}.
\end{theorem}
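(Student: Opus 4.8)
The plan is to promote the discrete oscillation decay of Lemma \ref{l3.2}, which holds on the corrected dyadic cylinders $\hat{Q}_{\lambda^k}$, to the continuous statement valid for every $\rho\in(0,\lambda)$. This is the standard ``filling the gaps between scales'' argument, and beyond Lemma \ref{l3.2} it requires only two elementary ingredients: the nesting of the corrected parabolic cylinders, and the summation of the geometric gradient contribution.

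First I would fix $\rho\in(0,\lambda)$ and pick the unique $k\in\mathbb{N}$ with $\lambda^{k+1}<\rho\leq\lambda^k$. Since $\rho\leq\lambda^k$ and, by Remark \ref{r3.1}, $\sigma\theta\geq2>0$, both the spatial radii and the temporal heights of the corrected cylinders are monotone in the scale, so that $\hat{Q}_{\rho}\subset\hat{Q}_{\lambda^k}$. Lemma \ref{l3.2} then gives
$$
\sup_{\hat{Q}_{\rho}}\bigl|u(x,t)-u(0,0)\bigr|\;\leq\;\sup_{\hat{Q}_{\lambda^k}}\bigl|u(x,t)-u(0,0)\bigr|\;\leq\;\lambda^{k(1+\alpha)}+|\nabla u(0,0)|\sum_{j=0}^{k-1}\lambda^{k+j\alpha}.
$$
Next I would bound the geometric sum by $\sum_{j=0}^{k-1}\lambda^{k+j\alpha}=\lambda^{k}\sum_{j=0}^{k-1}\lambda^{j\alpha}\leq\dfrac{\lambda^{k}}{1-\lambda^{\alpha}}$, and then convert powers of $\lambda^{k}$ into powers of $\rho$ via $\lambda^{k}<\lambda^{-1}\rho$, which yields $\lambda^{k(1+\alpha)}<\lambda^{-(1+\alpha)}\rho^{1+\alpha}$ and $\lambda^{k}<\lambda^{-1}\rho=\lambda^{-1}\rho^{-\alpha}\rho^{1+\alpha}$. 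Combining these estimates,
$$
\sup_{\hat{Q}_{\rho}}\bigl|u(x,t)-u(0,0)\bigr|\;\leq\;\lambda^{-(1+\alpha)}\rho^{1+\alpha}+\frac{\lambda^{-1}}{1-\lambda^{\alpha}}\,|\nabla u(0,0)|\,\rho^{-\alpha}\rho^{1+\alpha}\;\leq\;M\rho^{1+\alpha}\bigl(1+|\nabla u(0,0)|\rho^{-\alpha}\bigr),
$$
with $M:=\max\bigl\{\lambda^{-(1+\alpha)},\,\lambda^{-1}(1-\lambda^{\alpha})^{-1}\bigr\}$, a universal constant depending only on $\lambda$ and $\alpha$; since $\lambda\in\bigl(0,\tfrac{1}{2}\bigr)$ we have $M>1$, as required.

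I expect the only point demanding genuine care to be the cylinder nesting $\hat{Q}_{\rho}\subset\hat{Q}_{\lambda^k}$. The spatial inclusion $B_{\rho}\subset B_{\lambda^k}$ is immediate from $\rho\le\lambda^k$, but the temporal heights of $\hat{Q}_{\rho}$ and $\hat{Q}_{\lambda^k}$ carry the intrinsic scaling exponent $\theta$ and, for $p>2$, the extra height correction encoded through $\sigma$; one must check that the normalization $\sigma\theta\ge2$ recorded in Remark \ref{r3.1} orders these heights correctly, so that shrinking the spatial radius does not enlarge the time interval relative to the scaling. Once this is in place the remainder is mere bookkeeping of a finite geometric series and of powers of $\lambda$, with no further compactness or PDE input, since all the analytic content has already been absorbed into Lemma \ref{l3.2}.
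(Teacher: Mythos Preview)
Your proposal is correct and follows essentially the same approach as the paper: pick $k$ with $\lambda^{k+1}<\rho\le\lambda^k$, invoke Lemma~\ref{l3.2} on $\hat{Q}_{\lambda^k}$, sum the geometric series $\sum_j\lambda^{j\alpha}\le(1-\lambda^\alpha)^{-1}$, and trade $\lambda^k$ for $\rho$ at the cost of a factor $\lambda^{-(1+\alpha)}$. The only cosmetic difference is that the paper carries the computation as a single chain with the normalization $\rho^{-(1+\alpha)}$ on the left-hand side and arrives at the constant $M=\lambda^{-(1+\alpha)}\bigl(1+(1-\lambda^\alpha)^{-1}\bigr)$, whereas you bound the two terms separately; your extra care about the cylinder nesting $\hat{Q}_\rho\subset\hat{Q}_{\lambda^k}$ is a point the paper uses implicitly without comment.
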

\begin{proof}
Take $\rho\in(0,\lambda)$ and choose $k\in\mathbb{N}$ such that $\lambda^{k+1}<\rho\leq\lambda^k$. Using Lemma \ref{l3.2}, we estimate
\begin{eqnarray*}
\displaystyle\sup_{\hat{Q}_{\rho}}\frac{|u(x,t)-u(0,0)|}{\rho^{1+\alpha}}&\leq&\displaystyle \frac{1}{\lambda^{1+\alpha}}\sup_{\hat{Q}_{\lambda^k}}\frac{|u(x,t)-u(0,0)|}{\lambda^{k(1+\alpha)}}\\
&\leq&\frac{1}{\lambda^{1+\alpha}}\left[1+|\nabla u(0,0)|\frac{\sum\limits_{j=0}^{k-1}\lambda^{k+j\alpha}}{\lambda^{k(1+\alpha)}}\right] \\
&=&\frac{1}{\lambda^{1+\alpha}}\left[1+|\nabla u(0,0)|\lambda^{-k\alpha}\sum\limits_{j=0}^{k-1}\lambda^{j\alpha}\right]\\
&\leq&\frac{1}{\lambda^{1+\alpha}}\left(1+\frac{1}{1-\lambda^{\alpha}}\right)\left(1 +|\nabla u(0,0)|\lambda^{-k\alpha}\right)\\
&\leq&\frac{1}{\lambda^{1+\alpha}}\left(1+\frac{1}{1-\lambda^{\alpha}}\right)\left(1 +|\nabla u(0,0)|\rho^{-\alpha}\right),
\end{eqnarray*}
which concludes the proof.
\end{proof}

As a consequence, we obtain the first part of Theorem \ref{3.1}.

\begin{proof}[Proof of the first part of Theorem \ref{3.1}.]
Without loss of generality, we may assume that $K=Q_{\frac{1}{2}}$ and $(x_0,t_0)=(0,0)$. Using Theorem \ref{t3.2} (re-scaled according to Remark \ref{r2.1}, if needed), we estimate
\begin{eqnarray*}
\displaystyle\sup_{\hat{Q}_\rho}\left|u(x,t)-u(0, 0)-\nabla u(0,0)\cdot x\right| & \leq & \displaystyle\sup_{\hat{Q}_\rho}|u(x,t)-u(0,0)|+|\nabla u(0,0)|\rho \\
&\leq& M\left(1+|\nabla u(0,0)|\rho^{-\alpha}\right)\rho^{1+\alpha}+\rho^{1+ \alpha}\\
&\leq& 3M\rho^{1+\alpha}.
\end{eqnarray*}
\end{proof}

\subsection{Sharp estimates outside of the critical zone}\label{Sec3.2}

Next, we assume $p>2$ and prove the conclusion of Theorem \ref{t3.1}, despite having $(x_0,t_0)\notin S_{\rho}^\alpha(K\cap Q_1)$, i.e., when $|\nabla u(x_0,t_0)|>\rho^\alpha$, thus, completing the proof of Theorem \ref{t3.1}. As before, without loss of generality, we assume that $(x_0,t_0)=(0,0)$. Since $|\nabla u|$ is continuous, one can define $\tau=|\nabla u(0,0)|^{1/\alpha}>0$. Take any $\rho\in(0,\lambda)$. We then analyse all the possible cases.

\textbf{Case 1.} If $\rho\in[\tau,\lambda)$, then from Theorem \ref{t3.2} we obtain
$$
\displaystyle \sup_{\hat{Q}_{\rho}}|u(x,t)-u(0,0)|\leq M{\rho}^{1+\alpha}\left(1+|\nabla u(0,0)|\tau^{-\alpha}\right),
$$
for a constant $M>1$. Hence, $u$ is $C^{1+\alpha}$ (in the parabolic sense) at the origin.

\textbf{Case 2.} If $\rho\in(0,\tau)$, then in order to apply Theorem \ref{t3.2}, we need to properly re-scale $u$. Let
$$
v(x,t):=\frac{u(\tau x,\tau^\gamma t)-u(0,0)}{\tau^{1+\alpha}},
$$
where $\gamma:=2+\alpha(2-p)$. Observe that in $\hat{Q}_1: = B_1 \times \left(-\tau^{\gamma(\sigma-1)}, 0\right]$ the function $v$ is a weak solution of
$$
v_t-\Delta_p v=\tau^{1-\alpha(p-1)}f(\tau x,\tau^\gamma t):=g(x, t).
$$
Note also that
$$
\|g\|_{L^{q, r}(\tilde{Q}_1)} \le\tau^{-\alpha(p-1)+1-\left(\frac{n}{q}+\frac{\gamma}{r}\right)}\|f\|_{L^{q, r}(\hat{Q}_\tau)}.
$$
The choice of $\alpha$ in \eqref{1.2} guarantees that
$$
-\alpha(p-1)+1-\Big(\frac{n}{q}+\frac{\gamma}{r}\Big)\geq0.
$$
Using Theorem \ref{t3.2} and taking into account that $v(0,0)=0$ and $|\nabla v(0,0)|=1$, we get
$$
\displaystyle\sup_{\hat{Q}_1} |v(x,t)|=\sup_{\hat{Q}_{\tau}} \frac{|u(x,t)-u(0,0)|}{\tau^{1+\alpha}}\leq M(1+|\nabla u(0,0)|\tau^{-\alpha}).
$$
On the other hand, $u\in C_{\loc}^{1+\beta}$ for some $\beta\in(0,1)$ (see \cite{BC04, C91, D93, DF84, DF85, DF285, DiBUV02, LSU68, L96, W86}). Hence, there exists $\sigma>0$ small enough such that
$$
|\nabla v(x,t)|>\frac{1}{2},\,\,\,\forall (x,t)\in \hat{Q}_\varsigma.
$$
Therefore, $v$ is a weak solution of a uniformly parabolic equation, i.e.
$$
  v_t-\Div(\mathcal{A}(x,t)\nabla v)=f,
$$
where $\mathcal{A}(x,t)$ is (H\"{o}lder) continuous and $0<a<\mathcal{A}(x,t)<b<\infty$, for some constants $a$ and $b$. Thus, $v\in C^{1+\beta^*}$  locally (see \cite{DT17, K08}), where the sharp exponent is given by
$$
  \beta^*:=1-\Big(\frac{n}{q}+\frac{2}{r}\Big)\geq\alpha.
$$
The last inequality is true, since $p\geq2$. In particular, $v\in C^{1+\alpha}$, so there is a universal constant $C>0$ such that
$$
\displaystyle\sup_{\hat{Q}_{\rho_0}}|v(x,t)-\nabla v(0,0)\cdot x|\leq C\rho_0^{1+\alpha},\,\,\,\forall \rho_0\in\Big(0,\frac{\varsigma}{2}\Big),
$$
that is
$$
\displaystyle\sup_{\hat{Q}_{\rho_0}}\Big|\frac{u(\tau x,\tau^\gamma t)-u(0,0)}{\tau^{1+\alpha}}-\tau^{-\alpha}\nabla u(0,0)\cdot x\Big|\leq C\rho_0^{1+\alpha},
$$
or else
$$
\displaystyle\sup_{\hat{Q}_{\rho_0}}|u(\tau x,\tau^\gamma t)-u(0,0)-\nabla u(0,0)\cdot (\tau x)|\leq C(\tau\rho_0)^{1+\alpha}.
$$
The latter implies for $\rho_0\in\Big(0,\frac{\tau\varsigma}{2}\Big)$
$$
\displaystyle\sup_{\hat{Q}_{\rho_0}}|u(x,t)-u(0,0)-\nabla u(0,0)\cdot x|\leq C\rho_0^{1+\alpha},
$$
which means that $u$ is $C^{1+\alpha}$ at the origin.

Finally, if $\rho_0\in\Big[\frac{\tau\varsigma}{2},\tau\Big)$, then
\begin{eqnarray*}
  \displaystyle\sup_{\hat{Q}_{\rho_0}}|u(x,t)-u(0,0)-\nabla u(0, 0)\cdot x| & \leq & \displaystyle\sup_{\hat{Q}_{\rho_0}}|u(x,t)-u(0,0)|+|\nabla u(0,0)|\tau \\
&\leq & C\tau^{1+\alpha}\\
&\leq & C\left(\frac{2}{\varsigma}\right)^{1+\alpha}\rho_0^{1+\alpha} \\
&=& C\rho_0^{1+\alpha}.
\end{eqnarray*}
Therefore, the desired estimate is true, and the proof of Theorem \ref{t3.1} is complete.

The next result shows precisely how the $C^{1+\alpha}$ modulus of continuity for solutions of \eqref{1.1} degenerates along the $\varepsilon$-levels of $\kappa(n, p, q) = 1-\left(\frac{n}{q}+\frac{2}{r}\right)$ as $\varepsilon$ vanishes.
\begin{corollary}\label{c3.2}
Let $u$ be a bounded weak solution of \eqref{1.1} under the compatibility conditions \eqref{cc}. For any fixed $0<s<1$ and for $\varepsilon \ll 1$, if $f \in L^{\frac{n}{s(1-\varepsilon)}, \frac{2}{(1-s)\varepsilon}}(Q_1)$, then $u$ is $C^{1+\alpha_{\varepsilon}}$ (in the parabolic sense), where
$$
  \alpha_{\varepsilon} = \min\left\{\frac{2\varepsilon}{2(p-1) -(p-2)(1-s)\varepsilon},\alpha_{\mathrm{H}}^{-}\right\} \to 0 \quad \text{as} \quad \varepsilon \to 0+.
$$
\end{corollary}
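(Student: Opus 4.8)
The plan is to obtain Corollary \ref{c3.2} as an immediate consequence of Theorem \ref{t3.1}, once we check that the prescribed integrability pair $\left(q,r\right)=\left(\tfrac{n}{s(1-\varepsilon)},\tfrac{2}{(1-s)\varepsilon}\right)$ is admissible and that the sharp exponent \eqref{1.2} evaluates to $\alpha_{\varepsilon}$ at that pair. First I would record the elementary identities $\tfrac{n}{q}=s(1-\varepsilon)$, $\tfrac{n}{pq}=\tfrac{s(1-\varepsilon)}{p}$, and the closed forms of $\tfrac{2}{r}$, $\tfrac1r$, so that the two combinations occurring in \eqref{cc} and \eqref{1.2} become explicit functions of $(n,p,s,\varepsilon)$, with $\tfrac{n}{q}+\tfrac{2}{r}=1-\varepsilon$ for this choice. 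Then, using the algebraic identity recorded just below \eqref{1.2}, namely $p\big[1-(\tfrac{n}{pq}+\tfrac1r)\big]-\big[1-(\tfrac{n}{q}+\tfrac{2}{r})\big]=(p-1)\big(1-\tfrac1r\big)+\tfrac1r$, the denominator in \eqref{1.2} reduces to a linear function of $\varepsilon$; dividing the numerator $1-(\tfrac{n}{q}+\tfrac{2}{r})=\varepsilon$ by it identifies the first entry of the minimum defining $\alpha$ with $\tfrac{2\varepsilon}{2(p-1)-(p-2)(1-s)\varepsilon}$, and since the homogeneous exponent $\alpha_{\mathrm{H}}$ is untouched, the exponent supplied by Theorem \ref{t3.1} is exactly $\alpha_{\varepsilon}$.

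Next I would verify the compatibility conditions \eqref{cc} for $\varepsilon\ll1$: the minimal integrability inequality $\tfrac1r+\tfrac{n}{pq}<1$ and the upper bound $\tfrac{2}{r}+\tfrac{n}{q}<1$ hold because each left-hand side converges, as $\varepsilon\to0^{+}$, to a limit strictly below $1$ and hence stays below $1$ for small $\varepsilon$; the remaining lower bound $\max\{0,(1-\tfrac1r)(2-p)\}\le\tfrac{2}{r}+\tfrac{n}{q}$ is automatic when $p\ge2$ and, in the singular range $\max\{1,\tfrac{2n}{n+2}\}<p<2$, holds for $\varepsilon$ small by continuity in $\varepsilon$. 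With \eqref{cc} in force for $(q,r)$, Theorem \ref{t3.1} applies at each point $(x_0,t_0)$ of a fixed $K\subset\subset Q_1$ (in the critical zone for every admissible $p$, and outside it as well when $p\ge2$), so $u$ is $C^{1+\alpha_{\varepsilon}}$ in the parabolic sense. Finally, letting $\varepsilon\to0^{+}$ in the closed form of $\alpha_{\varepsilon}$ and recalling $\alpha_{\mathrm{H}}^{-}>0$ gives $\alpha_{\varepsilon}\to0$, which is precisely the asserted degeneration of the $C^{1+\alpha}$ modulus of continuity along the $\varepsilon$-levels of $\kappa(n,p,q)=1-(\tfrac{n}{q}+\tfrac{2}{r})$.

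The only genuine difficulty is bookkeeping: one must carry the algebra of \eqref{1.2} through cleanly enough to land exactly on the stated closed form of $\alpha_{\varepsilon}$, and one must make sure the prescribed pair $(q,r)$ does not leave the admissible region \eqref{cc} for small $\varepsilon$ — the delicate point being the lower compatibility bound $(1-\tfrac1r)(2-p)\le\tfrac{2}{r}+\tfrac{n}{q}$ in the singular regime $p<2$, which is the place where a smallness threshold on $\varepsilon$ (and, if needed, a mild constraint relating $s$ and $p$) actually gets used. Everything else is a direct appeal to Theorem \ref{t3.1} followed by a limit.
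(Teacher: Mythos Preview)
Your strategy is exactly what the paper intends: Corollary~\ref{c3.2} is stated without proof because it is meant to follow by substituting the given $(q,r)$ into the exponent formula \eqref{1.2} and invoking Theorem~\ref{t3.1}. So methodologically there is nothing to compare; your plan and the paper's implicit proof coincide.

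There is, however, an arithmetic slip in your proposal that you should not gloss over. With $q=\tfrac{n}{s(1-\varepsilon)}$ and $r=\tfrac{2}{(1-s)\varepsilon}$ one has $\tfrac{n}{q}=s(1-\varepsilon)$ and $\tfrac{2}{r}=(1-s)\varepsilon$, hence
\[
\frac{n}{q}+\frac{2}{r}=s(1-\varepsilon)+(1-s)\varepsilon=s+(1-2s)\varepsilon,
\]
which is \emph{not} $1-\varepsilon$ unless $\varepsilon=\tfrac12$. Consequently the numerator $1-\bigl(\tfrac{n}{q}+\tfrac{2}{r}\bigr)$ equals $(1-s)-(1-2s)\varepsilon$, not $\varepsilon$, and plugging into \eqref{1.2} does \emph{not} yield $\tfrac{2\varepsilon}{2(p-1)-(p-2)(1-s)\varepsilon}$; in fact with the stated $(q,r)$ the first entry of the minimum tends to $\tfrac{1-s}{p-1}>0$ as $\varepsilon\to0^{+}$, contradicting the asserted $\alpha_\varepsilon\to0$.

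Your denominator computation is correct: with $\tfrac{1}{r}=\tfrac{(1-s)\varepsilon}{2}$ the identity $(p-1)\bigl(1-\tfrac{1}{r}\bigr)+\tfrac{1}{r}$ gives precisely $\tfrac{1}{2}\bigl[2(p-1)-(p-2)(1-s)\varepsilon\bigr]$. The mismatch therefore sits in the choice of $q$: to make the numerator equal to $\varepsilon$ (which is what the surrounding text means by ``$\varepsilon$-levels of $\kappa=1-(\tfrac{n}{q}+\tfrac{2}{r})$'') one needs $\tfrac{n}{q}=1-(2-s)\varepsilon$, i.e.\ $q=\tfrac{n}{1-(2-s)\varepsilon}$. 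In short, the corollary as printed carries a typo in $q$, and your asserted identity $\tfrac{n}{q}+\tfrac{2}{r}=1-\varepsilon$ hides rather than derives it. Once the corrected $q$ is used, everything else you wrote---the verification of \eqref{cc} for small $\varepsilon$, the appeal to Theorem~\ref{t3.1}, and the limit $\alpha_\varepsilon\to0$---goes through unchanged.
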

Finally, we also obtain how the $C^{1+\alpha}$ modulus of continuity for solutions of \eqref{1.1} ``improves asymptotically'' along the $\varepsilon-$levels of
$$
  \varsigma(n, p, q, r) = \frac{nr}{(r-1)q}+\frac{2}{r-1} - (2-p)
$$
as $\varepsilon \to 0$, in the singular scenery, i.e., $\max\left\{1, \frac{2n}{n+2}\right\}<p<2$.

\begin{corollary}\label{c3.3}
Let $u$ be a bounded weak solution of \eqref{1.1} under the compatibility conditions \eqref{cc}. For any $0<s<1$ and for $0<\varepsilon \ll 1$ small enough, if $\max\left\{1, \frac{2n}{n+2}\right\}<p<2$ and $f \in L^{q, r}(Q_1)$, where $q=\frac{nr}{(r-1)[(1-s)(\varepsilon+2-p)]}$ and $r=\frac{2}{s(\varepsilon +2-p)} +1$, then $u$ is $C^{1+\alpha_{\varepsilon}}$ (in the parabolic sense), and
$$
  \alpha_{\varepsilon} \to \alpha_{\mathrm{H}} \quad \text{as} \quad \varepsilon \to 0+.
$$
\end{corollary}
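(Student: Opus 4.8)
The plan is to substitute the prescribed pair $(q,r)$ into Theorem \ref{t3.1}: first check that the compatibility conditions \eqref{cc} are satisfied once $\varepsilon$ is small enough, then read off the regularity exponent from \eqref{1.2}, and finally pass to the limit $\varepsilon\to 0^+$ in the resulting formula for that exponent.

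First I would set $w:=\varepsilon+2-p$, noting that $w>0$ precisely because $p<2$ (which also makes the prescribed $q,r$ finite) and that $w\to 2-p$ as $\varepsilon\to 0^+$. From $r=\frac{2}{sw}+1$ one reads off $\frac1r=\frac{sw}{2+sw}$ and $1-\frac1r=\frac{2}{2+sw}$, and then from $q=\frac{nr}{(r-1)(1-s)w}$ one gets $\frac nq=(1-\frac1r)(1-s)w=\frac{2(1-s)w}{2+sw}$, so that
\begin{equation*}
\frac nq+\frac2r=\frac{2w}{2+sw},\qquad \frac1r+\frac{n}{pq}=\frac{psw+2(1-s)w}{p(2+sw)}.
\end{equation*}
With these identities the verification of \eqref{cc} is elementary: the lower bound $(1-\frac1r)(2-p)\le\frac nq+\frac2r$ in the second condition is equivalent to $2-p\le w$, i.e. $\varepsilon\ge 0$; the upper bound $\frac nq+\frac2r<1$ reduces to $(2-s)w<2$; the minimal integrability condition $\frac1r+\frac{n}{pq}<1$ reduces to $(1-s)w<p$; and the requirements $q>n$, $r>2$ reduce to $(2-3s)w<2$ and $sw<2$. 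Since $w\to 2-p$ and $p\in(\max\{1,\frac{2n}{n+2}\},2)$ forces $(2-p)(2-s)<2$, $(1-s)(2-p)<2-p<p$ and $(2-p)s<1$, all these inequalities persist once $\varepsilon$ is sufficiently small. Hence \eqref{cc} holds and Theorem \ref{t3.1} applies, giving $u\in C^{1+\alpha_\varepsilon}$ in the parabolic sense (along the critical zone $S^{\alpha_\varepsilon}_\rho$, which is the range covered by Theorem \ref{t3.1} for $p<2$), with $\alpha_\varepsilon$ given by \eqref{1.2}.

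It then remains to identify $\alpha_\varepsilon$ and compute its limit. Using the identity recorded just after \eqref{1.2}, the denominator in \eqref{1.2} equals $(p-1)(1-\frac1r)+\frac1r=\frac{2(p-1)+sw}{2+sw}$, while its numerator is $1-(\frac nq+\frac2r)=\frac{2-(2-s)w}{2+sw}$; therefore
\begin{equation*}
\alpha_\varepsilon=\min\left\{\frac{2-(2-s)w}{2(p-1)+sw},\ \alpha_{\mathrm{H}}^{-}\right\}.
\end{equation*}
Letting $\varepsilon\to 0^+$ so that $w\to 2-p$, the elementary algebraic identity $2-(2-s)(2-p)=2(p-1)+s(2-p)$ shows that the numerator and denominator of the first term converge to the same quantity, hence that term tends to $1$; consequently $\alpha_\varepsilon\to\min\{1,\alpha_{\mathrm{H}}\}=\alpha_{\mathrm{H}}$ (recall $\alpha_{\mathrm{H}}\in(0,1]$), which is the assertion.

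The argument is essentially bookkeeping once the identities for $\frac1r$, $\frac nq$ in terms of $w$ are in place; the only point requiring a little care is to ensure that \emph{every} strict inequality in \eqref{cc}, including $q>n$ and $r>2$, survives uniformly on a one-sided neighbourhood of $\varepsilon=0$ — this is exactly where $p>1$ (to get $2-p<p$) and $p<2$ (to keep $w>0$ and the singular regime in play) are used. A secondary subtlety, already flagged above, is that for $p<2$ the conclusion of Theorem \ref{t3.1}, and hence of the corollary, is to be read at points of the critical zone.
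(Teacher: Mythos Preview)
Your proposal is correct and is exactly the computation the paper has in mind: the paper states Corollary~\ref{c3.3} as an immediate consequence of Theorem~\ref{t3.1} without giving a separate proof, and your verification of \eqref{cc}, identification of $\alpha_\varepsilon$ via \eqref{1.2}, and limiting argument as $\varepsilon\to 0^+$ carry out that consequence in full detail. Your remark that for $p<2$ the conclusion should be read at points of the critical zone is a correct reading of Theorem~\ref{t3.1} and in fact makes the scope of the corollary more precise than the paper's own statement.
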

\noindent{\bf Acknowledgements.} The authors would like to thank Eduardo V. Teixeira and Jos\'{e} Miguel Urbano for pointing out several improvements and by their insightful comments and suggestions that benefited a lot the final outcome of this manuscript. The first author thanks department of mathematics of Universidad de Buenos Aires for providing excellent working environment during his visit. J.V. da Silva thanks DM/FCEyN (Universidad de Buenos Aires) for providing a productive working atmosphere. This work was partially supported by Consejo Nacional de Investigaciones Cient\'{i}ficas y T\'{e}cnicas (CONICET-Argentina), Pronex/CNPq/Funcap 00068.01.00/15 and by FCT-Portugal via the grant SFRH/BPD/92717/2013. J.V. da Silva is a member of CONICET.

\end{document}